\newtheorem{theorem}{Theorem}[section]
\newtheorem{lemma}[theorem]{Lemma}
\newtheorem{proposition}[theorem]{Proposition}
\newtheorem{corollary}[theorem]{Corollary}
\theoremstyle{definition}
\newtheorem{definition}[theorem]{Definition}
\newtheorem{example}[theorem]{Example}
\theoremstyle{remark}
\newtheorem{remark}[theorem]{Remark}
\numberwithin{equation}{section}
\renewcommand{\AA}{\mathcal{A}}
\newcommand{\BB}{\mathcal{B}}
\newcommand{\EE}{\mathcal{E}}
\newcommand{\FF}{\mathcal{F}}
\newcommand{\LL}{\mathcal{L}}
\newcommand{\PP}{\mathcal{P}}
\newcommand{\QQ}{\mathcal{Q}}
\renewcommand{\SS}{\mathscr{S}}
\newcommand{\field}[1]{\mathbb{#1}}
\newcommand{\R}{\field{R}}
\newcommand{\N}{\field{N}}
\newcommand{\Z}{\field{Z}}
\newcommand{\Q}{\field{Q}}
\newcommand{\E}{\field{E}}
\newcommand{\T}{\field{T}}
\renewcommand{\P}{\field{P}}
\newcommand{\al}{\alpha}
\newcommand{\de}{\delta}
\newcommand{\ep}{\varepsilon}
\newcommand{\la}{\lambda}
\newcommand{\si}{\sigma}
\newcommand{\om}{\omega}
\newcommand{\De}{\Delta}
\newcommand{\La}{\Lambda}
\newcommand{\Si}{\Sigma}
\newcommand{\Om}{\Omega}
\newcommand{\BUC}{\text{\rm{BUC}}}
\newcommand{\sm}{\setminus}
\newcommand{\es}{\emptyset}
\newcommand{\ca}{\mathop{\text{\upshape{ca}}}\nolimits}
\begin{document}

\title[A semigroup approach to nonlinear L\'evy processes]{A semigroup approach to nonlinear L\'evy processes}% and their PDE\MakeLowercase{s}}

\author{Robert Denk}
\address{Department of Mathematics and Statistics, University of Konstanz}
\email{Robert.Denk@uni-konstanz.de}

\author{Michael Kupper}
\address{Department of Mathematics and Statistics, University of Konstanz}
\email{kupper@uni-konstanz.de}

\author{Max Nendel}
\address{Center for Mathematical Economics, Bielefeld University}
\email{max.nendel@uni-bielefeld.de}

\date{\today}

\thanks{We thank two anonymous referees for their helpful comments. Financial support through the German Research Foundation via CRC 1283 ``Taming Uncertainty'' is gratefully acknowledged.}

%\subjclass[2010]{}

\begin{abstract}
We study the relation between L\'evy processes under nonlinear expectations, nonlinear semigroups and fully nonlinear PDEs. First, we establish a one-to-one relation between nonlinear L\'evy processes and nonlinear Markovian convolution semigroups. Second, we provide a condition on a family of infinitesimal generators $(A_\lambda)_{\lambda\in\Lambda}$ of linear L\'evy processes which guarantees the existence of a nonlinear L\'evy process such that the corresponding nonlinear Markovian convolution semigroup
is a viscosity solution of the fully nonlinear PDE $\partial_t u=\sup_{\la\in \La} A_\la u$. The results are illustrated with several examples.

\smallskip
\noindent \textit{Key words:} L\'evy process, convex expectation space, Markovian convolution semigroup, fully nonlinear PDE, Nisio semigroup

\smallskip
\noindent \emph{AMS 2010 Subject Classification:} 60G51; 49L25; 47H20
\end{abstract}

\maketitle

\setcounter{tocdepth}{1}

\section{Introduction}
In this paper we study the relation between L\'evy processes under nonlinear expectations, nonlinear semigroups and fully nonlinear PDEs.
Let $(X_t)_{t\geq 0}$ be an $\R^d$-valued L\'evy process on a probability space $(\Om,\FF,\P)$. Then
\[
 (S(t)f)(x):=\E(f(x+X_t))
\]
 for $f\in \BUC(\R^d)$, $t\geq 0$ and $x\in \R^d$, defines a strongly continuous semigroup on the space of bounded and uniformly continuous functions $\BUC(\R^d)$ whose infinitesimal generator  $A\colon D(A)\subset\BUC(\R^d)\to\BUC(\R^d)$ is given by an integro-differential operator which is uniquely determined by a L\'evy triplet, see Applebaum~\cite{MR2512800} or Sato~\cite{MR1739520}.
Moreover, $(S(t)f)_{t\ge 0}$ is the solution
 of the abstract Cauchy problem
\[\partial_t u(t) = Au(t), \quad t>0,\]
with $u(0)=f\in \BUC(\R^d)$. For a detailed discussion on operator semigroups we refer to Pazy\ \cite{MR710486} or Engel and Nagel\ \cite{MR2229872}.

 We first extend the well-known relation between L\'evy processes and Markovian convolution semigroups of probability measures to a nonlinear setting. Nonlinear L\'evy processes were introduced in \cite{PengHu} as c\`adl\`ag
processes with stationary and independent increments under a sublinear expectation.
The G-Brownian motion due to \cite{MR2143645,PengG} is a special case of a nonlinear L\'evy process, see also Dolinsky et al.~\cite{MR2868935} or Denis et al.~\cite{MR2754968}. For an introduction to nonlinear expectations we refer to \cite{Peng}. Since we do not require any path regularity we call a process $(X_t)_{t\ge 0}$ an $\mathcal{E}$-L\'evy process with values in an abelian group $G$, if it has stationary and independent increments and  $X_t\to X_0$ in distribution as $t\searrow 0$ with respect to a convex expectation $\mathcal{E}$. We then provide a relation between $\mathcal{E}$-L\'evy processes and convex Markovian convolution semigroups, i.e.~strongly continuous
semigroups $(\SS(t))_{t\ge 0}$ on $\BUC(G)$ such that each $\SS(t)$ is a translation invariant convex kernel.
The proof relies on a version of Kolmogorov's extension theorem for nonlinear expectations elaborated in \cite{DKN}.

Our main focus lies on the construction of $\EE$-L\'evy processes with nonlinear generators. We start with an arbitrary family $\Lambda$
of generators \[A_\la\colon D(A_\la)\subset \BUC(G)\to \BUC(G)\] of Markovian convolution semigroups $S_\la=(S_\la(t))_{t\geq 0}$ of linear operators. We then construct the smallest Markovian convolution semigroup $(\SS(t))_{t\ge 0}$ which dominates each $(S_\la(t))_{t\geq 0}$. The corresponding $\EE$-L\'evy process can be viewed as a process with independent increments whose distribution is uncertain, i.e.~any distrubution of the increments associated to $(S_\lambda)_{\lambda\in\Lambda}$ is taken into account. We basically follow an idea by Nisio \cite{Nisio} in order to construct a sublinear Markovian convolution semigroup which results from a given family of linear Markovian convolution semigroups by constant optimization. In \cite{Nisio} Nisio considers strongly continuous semigroups on the space of all bounded measurable functions. However, by a theorem of Lotz (see e.g. \cite[Corollary 4.3.19]{MR2798103}), any strongly continuous semigroup on the space of bounded measurable functions already has a bounded generator, which is not suitable for most applications. We therefore modify Nisio's construction to the space $\BUC(G)$. Under the condition
that the subspace
 \begin{equation}\label{as:main} \bigg\{ f\in \bigcap_{\la\in \La}D(A_\la) : \big\{A_\la f\colon \la\in \La\big\}\;\text{is bounded and uniformly equicontinuous}\bigg\}
 \end{equation}
 is dense in $\BUC(G)$ we construct a (strongly continuous) Markovian convolution semigroup $(\SS(t))_{t\ge 0}$
 on $\BUC(G)$ with corresponding $\EE$-L\'evy process $(X_t)_{t\ge 0}$ on a sublinear expectation space $(\Omega,\mathcal{F},\EE)$ such that
 \[u(t,x):=(\SS(t)f)(x)=\EE(f(x+X_t)),\quad  t\geq 0,\, x\in G,\]
 is a viscosity solution of the fully nonlinear PDE
 \[\partial_t u = \sup_{\la\in \La} A_\la u  \quad \mbox{on } (0,\infty)\times G
\]
with $u(0)=f$ for all $f\in \BUC(G)$. In particular, the generator of the $\EE$-L\'evy process $(X_t)_{t\ge 0}$ is given by $\sup_{\la\in \La} A_\la$. Here, we use a slightly different notion of viscosity solution which fits to the semigroup setting. However, in many cases, particularly for the classical case $G=\R^d$, this leads to the same class or an even larger class of test functions. We refer to Crandall et al. \cite{MR1118699} for the classical definition and a detailed discussion of viscosity solutions. Moreover, we give a condition on the generators $(A_\lambda)_{\lambda\in\Lambda}$
which guarantees that the corresponding $\EE$-L\'evy process is tight, or equivalently each $\SS(t)$ is continuous from above. Throughout, the state space is an abelian group, which gives the opportunity to consider certain classes of cylindrical $G$-Wiener Processes as an infinite dimensional extension of the $G$-Brownian motion, or nonlinear L\'evy processes on the $d$-dimensional Torus.

Nonlinear $\R^d$-valued L\'evy processes have first been introduced in Hu and Peng \cite{PengHu}, where $G$-L\'evy processes with a decomposition $X=X^c+X^d$ into a continuous and a jump part are considered. Under the assumption that $X^c$ is a $G$-Brownian motion and $\EE(|X^d_t|)\le ct$ for some constant $c$, it is shown that $u(t,x)=\EE(f(x+X_t))$ is a viscosity solution of $\partial_t u(t,x)-G(u(t,x+\cdot)-u(t,x))=0$ and $u(0)=f$, where $G(\varphi(\cdot)):=\lim_{h\downarrow 0} \frac{1}{h}\EE(\varphi(X_h))$. The function $G$ is shown to have a L\'evy-Khinchine representation in terms of a set $\Lambda$ of L\'evy triplets $(b,\Si,\mu)$ satisfying an integrability condition, i.e.~$u(t,x)$ is the solution of
\begin{equation}\label{PDE:Rd}
\partial_t u =\sup_{(b,\Si,\mu)\in \La} A_{b,\Si,\mu} u,\quad\mbox{and}\quad u(0)=f,
\end{equation}
where $A_{b,\Si,\mu}$ is the generator with the L\'evy triplet $(b,\Si,\mu)$, see also Example \ref{genlevy}.
Conversely, starting from the unique solution of \eqref{PDE:Rd}, Hu and Peng \cite{PengHu} give a construction
of the respective nonlinear L\'evy process.
In Nutz and Neufeld \cite{NutzNeuf} the authors consider upper expectations $\EE(\cdot)=\sup_{\P} \E_\P(\cdot)$ over a class of all semimartingales with given differential characteristics in a set $\Lambda$ of L\'evy triplets,
which in \cite{NutzNeuf1} is shown to be analytic. This allows them to construct conditional nonlinear expectations and nonlinear L\'evy processes with general characteristics whose distributions are defined for all measurable functions. Under the conditions
\begin{equation}\label{cond:NN}
  \sup_{(b,\Si,\mu)\in \La} \Big(|b|+|\Si|+\int_{\R^d} |y| \wedge |y|^2\, {\rm d}\mu(y)\Big)<\infty\quad\mbox{and} \quad \lim_{\ep\searrow 0}\sup_{(b,\Si,\mu)\in \La}\int_{|z|\le\varepsilon}|z|^2\, {\rm d}\mu(z)=0
 \end{equation}
it is shown that $u(t,x)=\EE(f(x+X_t))$ is the unique viscosity solution of \eqref{PDE:Rd}. The conditions in \eqref{cond:NN} are weaker than the integrability condition in \cite{PengHu} and allow for instance to consider classes of L\'evy processes with infinite variation jumps. Our main condition \eqref{as:main} in the context of $\R^d$-valued processes is guaranteed under \begin{equation}\label{lev1111}
  \sup_{(b,\Si,\mu)\in \La} \Big(|b|+|\Si|+\int_{\R^d} 1\wedge |y|^2\, {\rm d}\mu(y)\Big)<\infty,
 \end{equation}
which does not exclude any L\'evy triplet at all. In particular, L\'evy processes with non-integrable jumps can be considered, see e.g.~Example \ref{cex2}, and for finite $\Lambda$ the condition \eqref{lev1111} is always satisfied.
In order to obtain uniqueness for the viscosity solution of \eqref{PDE:Rd} one additionally needs the second condition in \eqref{cond:NN} and tightness of the family of L\'evy measures $\{\mu:(b,\Si,\mu)\in\Lambda\}$, which is due to \cite{PengHu}. In Hollender \cite{H2016}
the results of \cite{NutzNeuf} are generalized to upper expectations over state-dependent L\'evy triplets, see also
 K\"uhn \cite{k18} for existence results on the respective integro-differential equations under fairly general conditions.
 A related concept to nonlinear L\'evy processes are second order backward stochastic differential equation with jumps, see Kazi-Tani et al.~\cite{kpz},~\cite{kpz1} and also Soner et al.~\cite{stz}.

The paper is organized as follows. In Section \ref{sec:mainresults} we introduce the notation and discuss our main results which are illustrated
with several examples in Section \ref{sec:ex}. The relation between $\EE$-L\'evy processes and Markovian convolution semigroups is given in Section \ref{sec2}. Finally, in Section \ref{sec3} we prove the main result by constructing
a version of Nisio semigroups on $\BUC(G)$.

\section{Main results}\label{sec:mainresults}

We say that $(\Om,\FF,\EE)$ is a \emph{convex expectation space} if $(\Om,\FF)$ is a measurable space and  $\EE\colon \LL^\infty(\Om,\FF)\to \R$ is a convex expectation which is  continuous from below. As usual $\LL^\infty(\Om,\FF)$ denotes the space of all bounded measurable functions $\Om\to \R$. Recall that a convex expectation on a convex set $M$ with $\R\subset M$ is a functional $\EE\colon M\to\R$ which satisfies
 \begin{enumerate}[]
 \item $\EE(X)\leq \EE(Y)$ whenever $X\leq Y$,
 \item $\EE(\al)=\al$ for all $\al\in \R$, and
 \item $\EE\big(\la X+(1-\la)Y\big)\leq \la \EE(X)+(1-\la)\EE(Y)$ for all $\la\in [0,1]$.
 \end{enumerate}
If in addition $\EE$ is positive homogeneous, i.e.~$\EE(\la X)=\lambda \EE(X)$ for all $\la>0$, then
$\mathcal{E}$ is  called a  \emph{sublinear} expectation and $(\Om,\FF,\EE)$ is a \emph{sublinear} expectation space. A convex expectation is said to be \emph{continuous from below} if $\EE(X_n) \nearrow \EE(X)$ for every increasing sequence $(X_n)$ in $\LL^\infty(\Om,\FF)$ which converges pointwise to $X\in \LL^\infty(\Om,\FF)$.

Let $(\Om,\FF,\EE)$ be a convex expectation space, and let $G$ be an abelian group with a translation invariant metric $d$ such that $(G,d)$ is separable and complete. We denote by $C_b(G)$ and $\BUC(G)$ the spaces of all bounded functions $f\colon G\to\R$ which are continuous and uniformly continuous, respectively. For an $\FF$-$\mathcal B$-measurable random variable $X\colon \Om\to G$ with values in $G$ endowed with the Borel $\sigma$-algebra $\mathcal{B}$, the functional
\[
 \EE\circ X^{-1}\colon C_b(G)\to \R,\quad f\mapsto \EE(f(X))
\]
defines a convex expectation which is called the \emph{distribution} of $X$ under $\EE$.
Given another random variable $Y\colon \Om\to S$ with values in a Polish space $S$, for $f\in C_b(S\times G)$ the function
 \[
  S \to \R, \quad y\mapsto \EE(f(y,X))
 \]
 is bounded and lower semicontinuous. In fact, for $g\in \BUC(S\times G)$, it follows that
 \[
  |\EE(g(y,X))-\EE(g(z,X))|\leq \|g(y,\,\cdot\,)-g(z,\, \cdot \,)\|_\infty
 \]
 for $y,z\in G$ and therefore, $ y\mapsto \EE(g(y,X))$ is uniformly continuous. Approximating $f\in C_b(S\times G)$ from below by a sequence $(g_n)_{n\in \N}\subset \BUC(S\times G)$, see \cite[Remark 5.4 a)]{DKN}, we obtain that $ y\mapsto \EE(f(y,X))$ is lower semicontinuous. Hence, $\EE(f(y,X))|_{y=Y}$ is in $\LL^\infty(\Omega,\FF)$, which shows that
$\EE\big(\EE(f(y,X))|_{y=Y}\big)$ is well-defined.
 Then, $X$ is called \emph{independent} of $Y$ if
 \[
  \EE(f(Y,X))=\EE\big(\EE(f(y,X))|_{y=Y}\big)
 \]
 for all $f\in C_b(S \times G)$.% For an introduction and background to nonlinear expectations we refer to (XXX).

\begin{definition}\label{deflevy}
\begin{enumerate}[a)]
\item We say that $\SS\colon \BUC(G)\to \BUC(G)$ is a convex \emph{kernel} if $(\SS\, \cdot\,)(x)$ is a convex expectation
on $\BUC(G)$ for all $x\in G$. It is called  \emph{continuous from above} if $\SS f_n \searrow \SS f$ (pointwise convergence)
for every decreasing sequence $(f_n)$ in $\BUC(G)$ which converges pointwise to  $f\in \BUC(G)$.

\item A convex kernel $\SS\colon \BUC(G)\to \BUC(G)$ is called a convex \textit{Markovian convolution}
if $\SS f_n \nearrow \SS f$ for every increasing sequence $(f_n)$ in $\BUC(G)$ which converges pointwise to $f\in \BUC(G)$ and
\[
  \big(\SS f\big)(x)=\big(\SS f_x\big)(0)
  \]
for all $f\in \BUC(G)$ and $x\in G$, where $f_x\colon G\to \R$ is given by $y\mapsto f(x+y)$.

\item\label{MCS} We say that $(\SS(t))_{t\geq 0}$ is a convex \textit{Markovian convolution semigroup} on $\BUC(G)$ if
\begin{enumerate}[(i)]
 \item $\SS(t)$ is a convex Markovian convolution for all $t\geq 0$,
 \item $\SS(0)f=f$ for all $f\in \BUC(G)$,
 \item $\SS(s+t)=\SS(s)\SS(t)$ for all $s,t\geq 0$,
 \item $\lim_{t\searrow 0}\|\SS(t)f-f\|_\infty=0$ for all $f\in \BUC(G)$.
 \end{enumerate}
 In this case, we say that $(\SS(t))_{t\geq 0}$ is continuous from above if each $\SS(t)$ is so.
 \item Let $(\Om,\FF,\EE)$ be a convex expectation space. Then, $(X_t)_{t\geq 0}$ is called an \textit{$\EE$-L\'evy process} if
 \begin{enumerate}[(i)]
  \item $X_t\colon \Om\to G$ is measurable for all $t\geq 0$,
  \item $\EE(f(X_0))=f(0)$ for all $f\in C_b(G)$,
  \item $\EE\circ (X_{s+t}-X_s)^{-1}=\EE\circ X_t^{-1}$ for all $s,t\geq 0$,
  \item $X_{s+t}-X_s$ is independent of $(X_{t_1},\ldots,X_{t_n})$ for all $s,t\geq 0$, $n\in \N$, $0\leq t_1< \ldots< t_n\leq s$,
  \item $\EE(f(X_t))\to f(0)$ for all $f\in C_b(G)$, i.e.~$X_t\to X_0$ in distribution as $t\searrow 0$.
 \end{enumerate}
 \end{enumerate}
\end{definition}

\begin{remark}\label{rem:kernel}
Let $\SS:\BUC(G)\to\BUC(G)$ be a convex kernel which is continuous from above. Then, the mapping
\[
\EE_0\colon \BUC(G)\to \R,\quad f\mapsto \big(\SS f\big)(0)
\]
is a convex expectation which is continuous from above. If $\SS$ is in addition a Markovian convolution then, by \cite[Theorem 3.10]{DKN} with $\Omega:=G$, $\FF$ the Borel $\si$-algebra on $G$ and $M=\BUC(G)$, there exists a convex expectation space $(\Omega,\FF,\EE)$ and a random variable $X$ (here the identity on $G$) such that
\[
  \big(\SS f\big)(x)=\EE(f(x+X))
 \]
 for all $f\in C_b(G)$ and $x\in G$.
 By \cite[Remark 5.4 c)]{DKN}
 the convex kernel $\SS$ has a unique extension to a convex kernel $\SS:C_b(G)\to C_b(G)$ which is continuous from above.

  %Moreover, for every $\ep>0$ there exists a compact set $K\subset G$ such that $\EE\big(1_{G\sm K}(X)\big)<\ep$, i.e.~$\EE\circ X^{-1}$ is \emph{tight}.
  %For the details we refer to \cite{DKN}.
\end{remark}

Our first result connects convex Markovian convolution semigroups and $\EE$-L\'evy processes. The proof is given in Section \ref{sec2}.

\begin{theorem}\label{equivlevy}
For every convex Markovian convolution semigroup  $(\SS(t))_{t\geq 0}$ which is continuous from above,
there exists a convex expectation space $(\Om,\FF,\EE)$ and an $\EE$-L\'evy process $(X_t)_{t\geq 0}$ such that
 \begin{equation}\label{defSSEE}
  \big(\SS(t)f\big)(x)=\EE(f(x+X_t))
 \end{equation}
 for every $f\in \BUC(G)$ and $x\in G$.

 Conversely, for every $\EE$-L\'evy process $(X_t)_{t\geq 0}$ on a convex expectation space $(\Om,\FF,\EE)$, the family $(\SS(t))_{t\geq 0}$ defined by \eqref{defSSEE}
is a convex Markovian convolution semigroup.
\end{theorem}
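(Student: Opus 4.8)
The plan is to establish the two implications separately, starting with the converse (from an $\EE$-L\'evy process to a semigroup), which is a direct verification, and then carrying out the main construction (from a semigroup to a process), which rests on the nonlinear Kolmogorov extension theorem of \cite{DKN}. So suppose first that $(X_t)_{t\ge0}$ is an $\EE$-L\'evy process and set $(\SS(t)f)(x):=\EE(f(x+X_t))$. That $\SS(t)f\in\BUC(G)$ and depends continuously on $x$ follows from the $1$-Lipschitz estimate $|\EE(U)-\EE(V)|\le\|U-V\|_\infty$ recorded before Definition \ref{deflevy}, since $\|f(x+\cdot)-f(x'+\cdot)\|_\infty$ is controlled by the modulus of continuity of $f$ at $d(x,x')$ (translation invariance of the metric). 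For fixed $x$, $f\mapsto(\SS(t)f)(x)$ is a convex expectation, continuity from below is inherited from that of $\EE$ applied to $f_n(x+X_t)\nearrow f(x+X_t)$, and $(\SS(t)f)(x)=(\SS(t)f_x)(0)$ is immediate; hence each $\SS(t)$ is a convex Markovian convolution, with $\SS(0)=\id$ coming from $\EE(f_x(X_0))=f_x(0)=f(x)$. For the semigroup law I would write $X_{s+t}=X_s+(X_{s+t}-X_s)$ and combine independence of $X_{s+t}-X_s$ from $X_s$ with the stationarity $\EE\circ(X_{s+t}-X_s)^{-1}=\EE\circ X_t^{-1}$; the nonlinear independence identity then gives
\[
(\SS(s+t)f)(x)=\EE\big(\EE(f(x+y+X_t))\big|_{y=X_s}\big)=\EE\big((\SS(t)f)(x+X_s)\big)=\big(\SS(s)\SS(t)f\big)(x).
\]

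Strong continuity is the one nontrivial point in this direction. Fixing $f$ and $\ep>0$, uniform continuity yields $\psi:=\ep+2\|f\|_\infty h\in\BUC(G)$ with $h(0)=0$ and $|f(x+z)-f(x)|\le\psi(z)$ for all $x,z$ (take $h(z)=\min\{1,d(z,0)/\de\}$ for suitable $\de$). Monotonicity gives $\EE(f(x)\pm\psi(X_t))$ as two-sided bounds for $(\SS(t)f)(x)$, and convexity, used to absorb the constant $f(x)$ by splitting $f(x)+\psi(X_t)=\tfrac12(2f(x))+\tfrac12(2\psi(X_t))$ and invoking $\EE(2f(x))=2f(x)$, reduces both bounds to
\[
\big|(\SS(t)f)(x)-f(x)\big|\le\tfrac12\,\EE\big(2\psi(X_t)\big),
\]
uniformly in $x$. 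Since $2\psi\in C_b(G)$, continuity in distribution gives $\EE(2\psi(X_t))\to2\psi(0)=2\ep$ as $t\searrow0$, so $\limsup_{t\searrow0}\|\SS(t)f-f\|_\infty\le\ep$; letting $\ep\searrow0$ proves (iv).

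For the main direction I must manufacture a process realizing the prescribed increment laws. By Remark \ref{rem:kernel} each $\SS(t)$ encodes the distribution of an increment of length $t$, and I would assemble the finite-dimensional distributions recursively: for $0=t_0<t_1<\dots<t_n$ and $g\in C_b(G^n)$, put $v_n:=g$ and, for $i=n,n-1,\dots,1$,
\[
v_{i-1}(y_1,\dots,y_{i-1}):=\Big(\SS(t_i-t_{i-1})\big[\,y_i\mapsto v_i(y_1,\dots,y_i)\,\big]\Big)(y_{i-1}),
\]
with $y_0:=0$, and then set $\Phi_{t_1,\dots,t_n}(g):=v_0$. Each step is legitimate because $\SS(t_i-t_{i-1})$ maps $\BUC$ into $\BUC$ and is a convex Markovian convolution; the Markovian convolution property makes $\Phi_{t_1,\dots,t_n}$ a convex expectation on $C_b(G^n)$, and the semigroup law $\SS(s+t)=\SS(s)\SS(t)$ is precisely what guarantees consistency of this family under refining or coarsening of the time grid. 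Applying the nonlinear Kolmogorov extension theorem of \cite{DKN} then yields a convex expectation space $(\Om,\FF,\EE)$ carrying a coordinate process $(X_t)_{t\ge0}$ whose finite-dimensional distributions are the $\Phi_{t_1,\dots,t_n}$.

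It then remains to read off the five defining properties. Measurability (i) is built into the coordinate construction; (ii) follows from $\SS(0)=\id$, while the representation $(\SS(t)f)(x)=\EE(f(x+X_t))$ results from translation covariance together with the marginal identity $\EE(g(X_t))=(\SS(t)g)(0)$ built into the construction; stationarity (iii) is immediate from the use of $\SS(t_i-t_{i-1})$ for each increment; and continuity in distribution (v) is the strong continuity of the semigroup evaluated at $0$. I expect the main obstacle to be twofold: first, verifying that the family $\Phi_{t_1,\dots,t_n}$ meets the precise regularity hypotheses (in particular continuity from above, inherited from the semigroup) required to invoke the extension theorem of \cite{DKN}; and second, deducing the independence property (iv) from the nested, inside-out structure of $\Phi_{t_1,\dots,t_n}$, which must be matched carefully against the ordering in the definition of nonlinear independence, where the latest increment is integrated in the innermost expectation. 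The converse direction, by contrast, is comparatively routine once the $1$-Lipschitz estimate and the convexity trick for strong continuity are in hand.
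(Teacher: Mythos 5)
Your overall strategy coincides with the paper's: the converse direction by direct verification, and the forward direction by encoding the semigroup as a consistent family of convex transition kernels and invoking the nonlinear Kolmogorov extension theorem of \cite{DKN} (the paper applies \cite[Theorem 5.6]{DKN} to $\EE_{s,t}(x,f):=(\SS(t-s)f)(x)$, which is exactly your nested recursion). The converse direction is essentially complete, and your strong-continuity argument via the continuous majorant $\psi=\ep+2\|f\|_\infty h$ is a clean variant of the paper's, which instead passes through indicator functions of $G\sm B(0,\de)$. One small repair: the split $f(x)+\psi(X_t)=\tfrac12(2f(x))+\tfrac12(2\psi(X_t))$ only yields the upper bound $(\SS(t)f)(x)-f(x)\le\tfrac12\EE(2\psi(X_t))$; for the lower bound the analogous split points the wrong way under convexity, and you need an extra step such as $\EE(W)\ge-\EE(-W)$ (from $0=\EE(0)\le\tfrac12\EE(W)+\tfrac12\EE(-W)$) applied to $W=f(x+X_t)$ together with $-f(x+X_t)\le\tfrac12(-2f(x))+\tfrac12(2\psi(X_t))$.

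The genuine gap is in the forward direction, where you defer precisely the verifications that constitute the substance of the paper's proof. First, stationarity and independence of increments are not ``immediate'' from the construction: the extension theorem delivers the joint laws of the positions $X_{t_1},\dots,X_{t_n},X_{s+t}$, and to pass to the increment $X_{s+t}-X_s$ one must write $f(X_{s+t}-X_s)=\tilde f(X_s,X_{s+t})$ with $\tilde f(x,y)=f(y-x)$ and exploit the translation invariance of the Markovian convolution through $(\SS(t)\tilde f(x,\cdot))(x)=(\SS(t)f_{-x})(x)=(\SS(t)f)(0)$; this computation (display \eqref{prooflevy1} and the chain following it) is the heart of the argument and is missing from your sketch. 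Second, property (v) of an $\EE$-L\'evy process requires $\EE(f(X_t))\to f(0)$ for all $f\in C_b(G)$, not only $f\in\BUC(G)$, so it is not simply ``strong continuity evaluated at $0$'': one must first extend each $\SS(t)$ to a kernel on $C_b(G)$ using continuity from above (Remark \ref{rem:kernel} --- this is also where the hypothesis of the theorem enters, and without it the extension theorem of \cite{DKN} is not applicable at all), and then approximate $f$ from below by $\BUC$ functions and invoke Dini's theorem to get locally uniform convergence of $t\mapsto(\SS(t)f_n)(0)$ before passing to the limit.
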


In this paper, we use the following definition of a viscosity solution.
\begin{definition}
 Let $D\subset \BUC(G)$ and $\AA\colon D\subset \BUC(G)\to \BUC(G)$. Then, we say that $u\colon [0,\infty)\to \BUC(G)$ is a \textit{$D$-viscosity subsolution} of the PDE
 \begin{equation}\label{eq:PDE}
  u_t= \AA u
 \end{equation}
 if $u\colon [0,\infty)\to \BUC(G)$ is continuous and for every $t>0$ and $x\in G$ we have
 \[
  \partial_t\psi(t,x)\leq \big(\AA\psi(t)\big)(x)
 \]
for every differentiable $\psi\colon (0,\infty)\to \BUC(G)$ which satisfies $(\psi(t))(x)=(u(t))(x)$, $\psi(s)\geq u(s)$ and $\psi(s)\in D$ for all $s>0$. Here and in the following, we use the notation $\psi(t,x) := (\psi(t))(x)$.

Analogously, $u$ is called a \textit{$D$-viscosity supersolution} of \eqref{eq:PDE}
if $u\colon [0,\infty)\to \BUC(G)$ is continuous and for every $t>0$ and $x\in G$
we have
 \[
  \partial_t\psi(t,x)\geq \big(\AA\psi(t)\big)(x)
 \]
for every differentiable $\psi\colon (0,\infty)\to \BUC(G)$ which satisfies $(\psi(t))(x)=(u(t))(x)$, $\psi(s)\leq u(s)$ and $\psi(s)\in D$ for all $s>0$.

We say that $u$ is a \textit{$D$-viscosity solution} of \eqref{eq:PDE} if $u$ is a $D$-viscosity subsolution and a $D$-viscosity supersolution.
\end{definition}

Now we are ready to state our main result.
Given a family $(A_\la)_{\la\in\La}$ of generators of L\'evy processes
we provide the existence of an $\EE$-L\'evy process on a sublinear expectation space with generator $\sup_{\la\in\La} A_\la$. The corresponding sublinear Markovian convolution semigroup
is a viscosity solution of the fully nonlinear PDE $u_t=\sup_{\la\in \La} A_\la u$ with $u(0,\cdot)=f\in \BUC(G)$.
The proof is postponed to Section \ref{sec3}.

\begin{theorem}\label{main}
  Let $\La\neq \es$ be an index set. Assume that the following holds:
\begin{enumerate}
 \item[(A1)] For each $\la\in \La$ let $A_\la\colon D(A_\la)\subset \BUC(G)\to \BUC(G)$ be the generator
 of a Markovian convolution semigroup $(S_\la(t))_{t\geq 0}$ of linear operators.
 \item[(A2)] The subspace
 \[
 D:=\bigg\{ f\in \bigcap_{\la\in \La}D(A_\la) : \big\{A_\la f\colon \la\in \La\big\}\;\text{is bounded and uniformly equicontinuous}\bigg\}
 \]
 is dense in $\BUC(G)$.
 \end{enumerate}

 Then, there exists a sublinear expectation space $(\Om,\FF,\EE)$ and an $\EE$-L\'evy process $(X_t)_{t\geq 0}$ such that for each $f\in \BUC(G)$ the function
 \begin{equation}\label{function:u}
  \left(u(t)\right)(x):=\EE(f(x+X_t)),\quad  t\geq 0,\, x\in G
 \end{equation}
 is a $D$-viscosity solution of the fully nonlinear PDE
 \begin{eqnarray}
  u_t(t,x)&=&\sup_{\la\in \La} \big(A_\la u(t)\big)(x), \quad (t,x)\in (0,\infty)\times G,\label{PDE1}\\
  u(0,x)&=& f(x),\quad x\in G.\label{PDE2}
 \end{eqnarray}
 Moreover, there exists a set $\mathcal{P}$ of probability measures on $(\Omega,\FF)$ such that
$\EE(Y)=\sup_{\P\in \PP}\E_\P(Y)$ for all $Y\in\LL^\infty(\Om,\FF)$.
\end{theorem}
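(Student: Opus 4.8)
The plan is to realise $u$ through a Nisio-type semigroup obtained from the linear semigroups $(S_\la)_{\la\in\La}$ by constant optimisation, and then to read off every assertion from the structure of that semigroup together with Theorem~\ref{equivlevy}. I first introduce the one-step operator $\mathcal I(t)\colon\BUC(G)\to\BUC(G)$, $(\mathcal I(t)f)(x):=\sup_{\la\in\La}(S_\la(t)f)(x)$. Since each $S_\la(t)$ is a linear Markovian convolution, hence a translation-invariant $\|\cdot\|_\infty$-contraction that preserves every modulus of continuity, the supremum $\mathcal I(t)$ is again bounded, monotone, sublinear, translation invariant and preserves the modulus of continuity of $f$; in particular it maps $\BUC(G)$ into itself and is an $\|\cdot\|_\infty$-contraction. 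From $S_\la(s+t)=S_\la(s)S_\la(t)\le\mathcal I(s)\mathcal I(t)$ and monotonicity one obtains the subadditivity $\mathcal I(s+t)\le\mathcal I(s)\mathcal I(t)$. For a partition $\pi=\{0=s_0<\dots<s_n=t\}$ I set $\SS_\pi(t):=\mathcal I(s_1-s_0)\cdots\mathcal I(s_n-s_{n-1})$; subadditivity shows that refining $\pi$ only increases $\SS_\pi(t)f$ pointwise, while $\|\SS_\pi(t)f\|_\infty\le\|f\|_\infty$ keeps the family bounded. Hence $\SS(t)f:=\sup_\pi\SS_\pi(t)f$ is well defined, is again a sublinear Markovian convolution with the same modulus-of-continuity bound, and concatenating partitions of $[0,s]$ and $[s,s+t]$ yields the semigroup law $\SS(s+t)=\SS(s)\SS(t)$.

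The delicate point is strong continuity, and this is exactly where (A2) enters. Fix $g\in D$, put $M:=\sup_\la\|A_\la g\|_\infty<\infty$ and let $\om$ be a common modulus of continuity of the family $\{A_\la g:\la\in\La\}$. Writing $S_\la(r)\ph(x)=\E\big[\ph(x+X^\la_r)\big]$ for the linear L\'evy process $X^\la$ associated with $S_\la$, the identity $S_\la(h)g-g=\int_0^h S_\la(r)A_\la g\,dr$ gives $\|S_\la(h)g-g\|_\infty\le hM$ uniformly in $\la$, together with the estimate $\|S_\la(r)A_\la g-A_\la g\|_\infty\le\sup_x\E\big[\om(d(X^\la_r,0))\big]$, which I would show tends to $0$ as $r\searrow0$ uniformly in $\la$ by combining the uniform equicontinuity of $\{A_\la g\}$ with the uniform smallness of the increments $X^\la_r$ forced by $\|S_\la(r)g-g\|_\infty\le rM$. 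These two bounds produce the one-step generator estimate
\[
 \big\|\mathcal I(h)g-g-h\sup_{\la\in\La}A_\la g\big\|_\infty=o(h)\qquad(h\searrow0),\quad g\in D .
\]
Propagating this along the compositions $\SS_\pi$ — the step that genuinely requires the uniform equicontinuity in (A2), since the intermediate functions must retain a common modulus of continuity and uniformly bounded $A_\la$-images — yields $\|\SS(h)g-g\|_\infty\le hM+o(h)$ for $g\in D$. Density of $D$ in $\BUC(G)$ and the contraction property of $\SS(h)$ then give $\lim_{h\searrow0}\|\SS(h)f-f\|_\infty=0$ for all $f\in\BUC(G)$, so $(\SS(t))_{t\ge0}$ is a sublinear Markovian convolution semigroup. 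Remark~\ref{rem:kernel} and Theorem~\ref{equivlevy} now furnish the sublinear expectation space $(\Om,\FF,\EE)$ and the $\EE$-L\'evy process $(X_t)_{t\ge0}$ with $(\SS(t)f)(x)=\EE(f(x+X_t))$, which is \eqref{function:u}.

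It remains to verify the viscosity property of $u(t)=\SS(t)f$ and the representation of $\EE$. For the supersolution inequality, let $\ps$ touch $u$ from below at $(t_0,x_0)$ with $\ps(s)\in D$; using the semigroup law $u(t_0)=\SS(h)u(t_0-h)$, the domination $\SS(h)\ge\mathcal I(h)\ge S_\la(h)$ and monotonicity together with $u\ge\ps$ gives $u(t_0,x_0)\ge(S_\la(h)\ps(t_0-h))(x_0)$, whence, after subtracting $\ps(t_0-h,x_0)$, dividing by $h$ and letting $h\searrow0$, $\pa_t\ps(t_0,x_0)\ge(A_\la\ps(t_0))(x_0)$ for every $\la$, and therefore $\pa_t\ps(t_0,x_0)\ge\sup_\la(A_\la\ps(t_0))(x_0)$. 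The subsolution inequality is the main obstacle: if $\ps$ touches $u$ from above, then $u(t_0)\le\SS(h)\ps(t_0-h)$ by monotonicity, and the whole estimate reduces to the one-sided generator bound $\SS(h)g-g\le h\sup_\la A_\la g+o(h)$ for $g\in D$ established above, giving $\pa_t\ps(t_0,x_0)\le\sup_\la(A_\la\ps(t_0))(x_0)$; the initial condition \eqref{PDE2} follows from $\SS(0)=\id$ and continuity. Finally, $\EE$ is a sublinear expectation that is regular by construction, so the Fenchel--Moreau / robust representation theorem (equivalently, the sublinear case of the extension theorem underlying Remark~\ref{rem:kernel}) provides a set $\PP$ of probability measures on $(\Om,\FF)$ with $\EE(Y)=\sup_{\P\in\PP}\E_\P(Y)$ for all $Y\in\LL^\infty(\Om,\FF)$, which completes the proof.
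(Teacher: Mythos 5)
Your overall architecture (one-step operator $J_t f=\sup_\la S_\la(t)f$, compositions over partitions, pointwise supremum over partitions, then semigroup law, generator identification on $D$, viscosity property, dual representation) is exactly the paper's, but two steps you rely on would fail as described. The first is the passage from the one-step estimate $\|J_h g-g-h\sup_\la A_\la g\|_\infty=o(h)$ to the same estimate for $\SS(h)$ by ``propagating along the compositions $\SS_\pi$''. For this propagation you need the intermediate functions $J_{h_{k+1}}\cdots J_{h_n}g$ to retain uniformly bounded and equicontinuous $A_\la$-images, i.e.\ to stay in $D$ with uniform control; (A2) gives no such invariance, and in general $J_h g=\sup_\la S_\la(h)g$ does not even lie in $\bigcap_\la D(A_\la)$. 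The paper avoids this entirely: it proves by induction on the partition the integral inequality $J_\pi f-f\le\int_0^{\max\pi}\SS(s)\AA f\,{\rm d}s$ (Lemma~\ref{intsemi1}), where the induction step applies the sublinear operator $J_{\pi'}$ to a Bochner integral and pushes it inside via a Jensen-type inequality (Lemma~\ref{intsemi}); only the initial $f\in D$ is ever needed. Without this (or an equivalent device) your upper bound $\SS(h)g-g\le h\,\AA g+o(h)$ --- which is precisely what the subsolution half of the viscosity argument rests on --- is not established. (A smaller instance of the same glossing occurs in the semigroup law: the inequality $\SS(s)\SS(t)f\le\SS(s+t)f$ requires interchanging $J_{\pi_0}$ with the supremum defining $\SS(t)f$, which the paper justifies via continuity from below of $J_{\pi_0}$ and the monotone approximation of Corollary~\ref{seq}.)

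The second gap is the construction of $(\Om,\FF,\EE)$ and of the set $\PP$. You invoke Theorem~\ref{equivlevy} and then a Fenchel--Moreau/robust representation, but Theorem~\ref{equivlevy} is stated for semigroups that are \emph{continuous from above}, and under (A1)--(A2) alone this is not available (it is the content of the additional hypothesis (A3) in Proposition~\ref{prop:A3}); likewise, the abstract representation of a sublinear functional on $\LL^\infty(\Om,\FF)$ yields countably additive probability measures only under such a regularity condition, and otherwise produces merely finitely additive set functions. The paper instead proves Proposition~\ref{dualrep} directly: it realizes each $J_\pi$ as a supremum of laws of piecewise-L\'evy processes $X^\phi$ driven by $\La$-simple controls $\phi$ on an auxiliary probability space, and defines $\PP$ as the family of these laws on the path space, so that $\EE=\sup_{\P\in\PP}\E_\P$ by construction and the L\'evy-process properties follow from this explicit dual representation. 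You need either this construction or a genuine substitute; ``regular by construction'' does not supply one.
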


 \begin{remark}\label{rem:Picard}
  In the situation of Theorem \ref{main}, if each $A_\la\colon \BUC(G)\to \BUC(G)$ is a bounded linear operator and
  $\sup_{\la\in \La} \|A_\la\|<\infty$,  then the mapping
  \[
  \BUC(G)\to\BUC(G),\quad u\mapsto \sup_{\la\in \La} A_\la u
  \]
is Lipschitz continuous. Therefore, by the Picard-Lindel\"of theorem, the function $u$ in \eqref{function:u} is a classical solution of the fully nonlinear PDE \eqref{PDE1}-\eqref{PDE2}, which satisfies $u\in C^1([0,\infty);\BUC(G))$.
 \end{remark}

 In most applications, the conditions (A1) and (A2) in Theorem \ref{main} can be easily verified as shown in
 Section \ref{sec:ex}. For the sake of illustration, we consider the
 case $G=\R^d$, where the L\'evy-Khintchine formula characterizes generators of Markovian convolution semigroups of linear operators by means of so-called L\'evy triplets, see e.g. Applebaum~\cite{MR2512800} or Sato~\cite{MR1739520}. Given a set $\La$ of L\'evy triplets $(b,\Si,\mu)$, i.e.~$b\in \R^d$, $\Si\in \R^{d\times d}$ is a symmetric positive semidefinite matrix and $\mu$ is a L\'evy measure, the condition \eqref{lev1111} is sufficient to guarantee (A2) with $\BUC^2(\R^d)\subset D$. Here, $\BUC^2(\R^d)$ denotes the space of all functions which are twice differentiable with bounded uniformly continuous derivatives up to order 2.  For more details, we refer to Example \ref{genlevy} which contains $G=\R^d$ as a special case.

\begin{remark} Let $\psi\in C_b^{2,3}((0,\infty)\times \R^d)$, where $C_b^{2,3}((0,\infty)\times \R^d)$ stands for the space of all functions of $(t,x)\in (0,\infty)\times\R^d$ for which all partial derivatives up to order 2 in $t$ and up to order 3 in $x$ exist, are continuous and bounded.
Then,
\[
   \lim_{h\to 0}\sup_{x\in \R^d}\bigg|\frac{\psi(t+h,x)-\psi(t,x)}{h}-\partial_t\psi(t,x)\bigg| = 0
\]
for all $t>0$ and therefore, $\psi\colon (0,\infty)\to \BUC(\R^d)$ is differentiable with $\psi(s)\in \BUC^2(\R^d)$ for all $s>0$ using the identification $(\psi(s))(x):=\psi(s,x)$. Therefore, the class of test functions considered in the framework of $\BUC^2(\R^d)$-viscosity solutions includes the class $C_b^{2,3}((0,\infty)\times \R^d)$ of test functions, which is often considered in classical viscosity theory, see e.g. Denis et al.~\cite{MR2754968} or Hu and Peng~\cite{PengHu}. Assuming in addition to \eqref{lev1111} that for every $\ep>0$ there exists some $M>0$ such that
 \begin{equation}\label{lev2}
  \sup_{(b,\Si,\mu)\in \La} \mu\big(\R^d\sm B(0,M)\big)<\ep ,
 \end{equation}
where $B(0,M):=\{x\in\R^d:|x|\le M\}$, and the second condition in \eqref{cond:NN}
one obtains from  \cite[Corollary 53]{PengHu} the uniqueness of the viscosity solution of the PDE \eqref{PDE:Rd}.

%Note that the conditions (1.3) and (1.4) in Nutz and Neufeld \cite{NutzNeuf} imply \eqref{lev1111} - \eqref{lev3}. However, while conditions (1.3) and (1.4) in \cite{NutzNeuf} exclude all L\'evy triplets with non-integrable jumps, \eqref{lev1111} does not exclude any L\'evy triplet at all. In particular, for finite $\La$ the condition \eqref{lev1111} is always satisfied.
%Below, we will give examples of $\EE$-L\'evy processes which satisfy \eqref{lev1111}
%but not \eqref{lev3} in Example \ref{cex1}, and not (1.3) in \cite{NutzNeuf} in Example \ref{cex2}, respectively.
\end{remark}

Let $C_0(G)$ be the closure of the space $C_c(G)$ of all continuous functions with compact support w.r.t.~the supremum norm $\|\cdot\|_\infty$. Note that the existence of a function in $C_0(G)\setminus \{0\}$ already implies that $G$ is locally compact and vice versa since $G$ is a topological abelian group, which is metrizable. The following additional condition on the generators $(A_\lambda)_{\lambda\in \Lambda}$ implies that the related Markovian convolution semigroup is
continuous from above.
\begin{proposition}\label{prop:A3}
In addition to the assumptions in Theorem \ref{main}, suppose:
\begin{enumerate}
 \item[(A3)] For every $\varepsilon >0$ there exists $\varphi\in \bigcap_{\la\in \La} \big(D(A_\la)\cap C_0(G)\big)$ with $0\leq \varphi\leq 1$, $\varphi(0)=1$ and $\sup_{\la\in \La}\|A_\la\varphi\|_\infty\leq \varepsilon$.
\end{enumerate}
 Then, the related Markovian convolution semigroup $(\SS(t)f)(x)=\EE(f(x+X_t))$, $t\ge 0$, is
 continuous from above on $\BUC(G)$.
 %$\EE\circ X^{-1}_t$ is tight for all $t\geq 0$.
\end{proposition}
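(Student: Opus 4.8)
The plan is to show that assumption (A3) forces tightness of the laws of $X_t$, and then to deduce continuity from above from tightness by a Dini-type argument. Before that I would make two reductions. Continuity from above of $\SS(t)$ means $\SS(t)f_n\searrow\SS(t)f$ pointwise whenever $f_n\searrow f$ in $\BUC(G)$. Setting $g_n:=f_n-f\in\BUC(G)$, which decreases to $0$, monotonicity and the subadditivity of the sublinear operator $\SS(t)$ give $\SS(t)f\le\SS(t)f_n\le\SS(t)f+\SS(t)g_n$, so it suffices to prove $\SS(t)g_n\to 0$ for $g_n\searrow 0$. Since $\SS(t)$ is a convolution, $(\SS(t)g_n)(x)=(\SS(t)(g_n)_x)(0)$ with $(g_n)_x\searrow 0$, so it is enough to treat the evaluation at $x=0$.

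The heart of the argument is the estimate $(\SS(t)(1-\varphi))(0)\le t\varepsilon$, where $\varphi=\varphi_\varepsilon$ is the function from (A3). Since $\varphi\in D(A_\la)$, each linear Markovian convolution satisfies $S_\la(h)\varphi-\varphi=\int_0^h S_\la(s)A_\la\varphi\,\mathrm ds$; as $S_\la(s)$ is a positive, constant-preserving contraction, $S_\la(s)A_\la\varphi\ge-\|A_\la\varphi\|_\infty\ge-\varepsilon$ pointwise, whence $S_\la(h)\varphi\ge\varphi-h\varepsilon$ for every $\la$. Writing $\mathcal I(h):=\sup_{\la\in\La}S_\la(h)$ for the one-step operator underlying the construction in Section~\ref{sec3}, linearity of each $S_\la(h)$ and $S_\la(h)1=1$ give $\mathcal I(h)(1-\varphi)=1-\inf_\la S_\la(h)\varphi\le(1-\varphi)+h\varepsilon$. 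Because $\mathcal I(h)$ is monotone and commutes with the addition of constants, iterating yields $\mathcal I(h)^n(1-\varphi)\le(1-\varphi)+nh\varepsilon$, and more generally every partition product $\mathcal I(h_m)\cdots\mathcal I(h_1)(1-\varphi)$ with $\sum_i h_i=t$ is bounded by $(1-\varphi)+t\varepsilon$. Since $\SS(t)(1-\varphi)$ is the corresponding limit (respectively supremum) of such products, $\SS(t)(1-\varphi)\le(1-\varphi)+t\varepsilon$; evaluating at $0$ and using $\varphi(0)=1$ gives the claim.

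From here I obtain tightness. Fix $\eta\in(0,1)$ and set $K_\eta:=\{\varphi\ge\eta\}$, which is compact because $\varphi\in C_0(G)$ (recall (A3) forces $G$ to be locally compact). Using the representation $\EE=\sup_{\P\in\PP}\E_\P$ from Theorem~\ref{main} together with $1-\varphi\ge 0$ and $1-\varphi\ge 1-\eta$ on $K_\eta^c$, each $\P\in\PP$ satisfies $(1-\eta)\,\P(X_t\in K_\eta^c)\le\E_\P\big((1-\varphi)(X_t)\big)\le\EE\big((1-\varphi)(X_t)\big)=(\SS(t)(1-\varphi))(0)\le t\varepsilon$, hence $\sup_{\P\in\PP}\P(X_t\in K_\eta^c)\le t\varepsilon/(1-\eta)$. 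As $\varepsilon>0$ is arbitrary in (A3), the family $\{\P\circ X_t^{-1}:\P\in\PP\}$ is tight. Finally, given $g_n\searrow 0$ in $\BUC(G)$ and $\delta>0$, choose a compact $K$ with $\sup_{\P}\P(X_t\in K^c)<\delta$; Dini's theorem gives $\sup_{x\in K}g_n(x)\to0$, and since $0\le g_n\le g_1$ we get $(\SS(t)g_n)(0)=\sup_\P\E_\P g_n(X_t)\le\sup_{x\in K}g_n(x)+\|g_1\|_\infty\,\delta$. Letting $n\to\infty$ and then $\delta\to0$ yields $(\SS(t)g_n)(0)\to0$, which by the reductions establishes that each $\SS(t)$ is continuous from above.

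The main obstacle is the estimate of the second paragraph: it must be carried out on the level of the explicit Euler/Nisio approximation constructed in Section~\ref{sec3}, not through the generator, precisely because (A3) controls only $\|A_\la\varphi\|_\infty$ and not the equicontinuity of $\{A_\la\varphi\}_{\la\in\La}$, so that $1-\varphi$ need not lie in the domain $D$ and the viscosity/PDE characterisation is unavailable. The point requiring care is thus verifying that the uniform pointwise bound for the partition products survives the passage to the limit defining $\SS(t)$; by contrast, the final step is routine, the only mild subtlety being that the non-continuous set $K_\eta^c$ is handled per measure $\P$, the representation $\EE=\sup_\P\E_\P$ being applied only to the continuous function $1-\varphi$.
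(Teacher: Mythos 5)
Your proof is correct and takes essentially the same route as the paper's: the key estimate $(\SS(t)(1-\varphi))(0)\le t\,\sup_{\la}\|A_\la\varphi\|_\infty$, a compact set extracted from $\varphi\in C_0(G)$, and a Dini argument on that compact set. The only difference is cosmetic: you re-derive the estimate by iterating $J_h(1-\varphi)\le(1-\varphi)+h\varepsilon$ through the Nisio approximation, whereas the paper simply invokes the Lipschitz bound of Lemma \ref{lip1}, observing in one line that this bound needs only $1-\varphi\in\bigcap_{\la\in\La}D(A_\la)$ with $\{A_\la\varphi\}$ bounded (equicontinuity, i.e.\ membership in $D$, is not used there), so the concern you raise in your last paragraph is real but already absorbed into the paper's lemma.
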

The proof is given at the end of Section~5. In line with Remark \ref{rem:kernel},
under (A3) the Markovian convolution semigroup $(\SS(t))_{t\ge 0}$ has a unique extension from $\BUC(G)$ to $C_b(G)$ which is continuous from above. Moreover, continuity from above implies a dual max-representation for the sublinear expectation $\mathcal{E}$ in terms of probability measures, see \cite[Lemma 2.4, Lemma 3.2]{DKN}.
For instance, in the case $G=\R^d$, where the generators are given by L\'evy triplets $(b,\Si,\mu)\in\Lambda$, the condition (A3) holds if \eqref{lev1111} is satisfied and the set of L\'evy measures $\{\mu : (b,\Si,\mu)\in \La\}$ is tight. %Note that $C_0(G)\neq \{0\}$ if and only if $G$ is locally compact.
%Therefore, (A3) can only be satisfied if $G$ is locally compact.

\section{Examples}\label{sec:ex}

Let $\ca_+^1(G)$ be the set of all Borel probability measures on $G$.

\begin{example}[Compound Poisson processes]\label{compois}
 For $\la \geq 0$ and $\mu\in \ca_+^1(G)$, let
 \[
  \big(A_{\la,\mu} f\big)(x):=\la \int_G f(x+y)-f(x)\, {\rm d}\mu(y),\quad f\in \BUC(G),\,x\in G.
 \]
 Then, $A_{\la,\mu}\colon \BUC(G)\to \BUC(G)$ is a bounded linear operator which satisfies the positive maximum principle (cf.~\cite[Definition 4.5.1]{MR1873235}), i.e.~for $f\in \BUC(G)$ and $x_0\in G$ with $f(x_0)=\max_{x\in G} f(x)\geq 0$ one has $\big(A_{\la,\mu} f\big)(x_0)\leq 0$. Further, since $A_{\lambda,\mu}$ is bounded and linear, it generates the linear uniformly continuous semigroup $(e^{tA_{\la,\mu}})_{t\geq 0}$. Recall that for a bounded linear operator $B\colon \BUC(G)\to \BUC(G)$ the exponential $e^{B}:=\sum_{k=0}^\infty \tfrac{1}{k!}B^k$ of $B$ is again a bounded linear operator $\BUC(G)\to \BUC(G)$. We first show that $S_{\la,\mu} (t):=e^{tA_{\la,\mu}}$ satisfies
\[
  \big(S_{\la,\mu}(t)f\big)(x)=\E(f(x+J_t))=\int_G f(x+y)\, {\rm d}\big(\P\circ J_t^{-1}\big)(y),\quad f\in \BUC(G)
 \]
for all $t\geq 0$, where $(J_t)_{t\geq 0}$ is a compound Poisson process with rate $\lambda$ and jump size distribution $\mu$
on a probability space $(\Om,\FF,\P)$. In particular, $\big(S_{\la,\mu} (t)\big)_{t\geq 0}$ is a linear Markovian convolution semigroup. Indeed, let $J_t=\sum_{i=1}^{N_t} Y_i$ for an i.i.d.~sequence $(Y_i)_{i\in \N}$ of random variables $Y_i\colon \Om\to G$ such that $\P\circ Y_i^{-1} =  \mu$, and a random variable $N_t\colon \Om\to \N_0$ which is independent of $(Y_i)_{i\in \N}$ and satisfies $\P(N_t=n)=e^{-\la t}\frac{(\la t)^n}{n!}$ for all $n\in \N$. Then, for $f\in \BUC(G)$ and $x\in G$ we have
\begin{align*}
  \E(f(x+J_t))&=\sum_{n=0}^\infty \E(f(x+Y_1+\ldots +Y_n)) e^{-\la t}\frac{(\la t)^n}{n!}\\
  &=e^{-\la t}\sum_{n=0}^\infty \frac{t^n\la^n}{n!}\int_G\cdots \int_G f(x+y_1+\ldots +y_n)\, {\rm d}\mu(y_1)\cdots {\rm d}\mu (y_n)\\
  &= e^{-\la t}\sum_{n=0}^\infty \frac{\big(t^n(A_{\la,\mu}+\la )^nf\big)(x)}{n!}=e^{-\la t}\big(e^{t(A_{\la,\mu}+\la )}f \big)(x)\\
  &=\big(e^{tA_{\la,\mu}}f\big)(x)=\big(S_{\la,\mu}(t)f\big)(x),
 \end{align*}
where we used $\big((A_{\la,\mu}+\la)f\big)(x)=\la \int_Gf(x+y)\, {\rm d}\mu(y)$.

 Now, assume that $\La\subset [0,\infty)$ is bounded and $\QQ\subset \ca_+^1(G)$. Since
 \[\left\{ f\in \BUC(G)\, : \, \begin{array}{l} \{A_{\la,\mu}f\colon (\la,\mu)\in \La\times \QQ\}\;\text{is bounded } \\ \text{and uniformly equicontinuous}\end{array}\right\}\\=\BUC(G),\]
the assumptions (A1) and (A2) are satisfied with $D=\BUC(G)$. Hence, by Theorem \ref{main} and Remark \ref{rem:Picard}
there exists a nonlinear expectation space $(\Om,\FF,\EE)$ and an $\EE$-L\'evy process $(X_t)_{t\geq 0}$ such that for all $f\in \BUC(G)$ the function
  \[
  u(t,x):=\big(u(t)\big)(x):=\EE(f(x+X_t)),\quad t\geq 0,\,x\in G,
 \]
 is the unique classical solution of the fully nonlinear PIDE
 \begin{eqnarray*}
  u_t(t,x)&=&\sup_{(\la,\mu)\in \La\times \QQ} \big(A_{\la,\mu} u(t)\big)(x), \quad  (t,x)\in (0,\infty)\times G,\\
  u(0,x)&=&f(x),\quad x\in G,
 \end{eqnarray*}
with $u\in C^1([0,\infty);\BUC(G))$.
\end{example}

\begin{example}[L\'evy processes on $\R^d$]\label{genlevyRd}
 Let $G=\R^d$ with $d\in \N$ and consider a set $\La$ of L\'evy triplets $(b,\Si,\mu)$, i.e. $b\in \R^d$, $\Si\in \R^{d\times d}$ is a symmetric positive semidefinite matrix and $\mu$ is a L\'evy measure on $\R^d$, i.e. a Borel measure satisfying
 \[
  \mu(\{0\})=0\quad \text{and}\quad \int_{\R^d} 1\wedge |y|^2\, {\rm d}\mu(y)<\infty.
 \]
 By the L\'evy-Khintchine formula, for each L\'evy triplet $(b,\Si,\mu)$ there exists a Markovian convolution semigroup $(S_{b,\Si,\mu}(t))_{t\geq 0}$ of linear operators on $\BUC(\R^d)$ with generator $A_{b,\Si,\mu}\colon D(A_{b,\Si,\mu})\subset \BUC(\R^d)\to \BUC(\R^d)$. Moreover, $\BUC^2(\R^d)\subset D(A_{b,\Si,\mu})$, where $\BUC^2(\R^d)$ denotes the space of all functions $\R^d\to \R$ which are twice differentiable with bounded uniformly continuous derivatives up to order 2.  For $f\in \BUC^2(\R^d)$, the function $A_{b,\Si,\mu}f$ is given by
 \[
  \big(A_{b,\Si,\mu}f\big)(x)= \langle b, \nabla f(x)\rangle + \frac{1}{2}{\rm tr} \big(\Si \nabla^2f(x)\big)+\int_{\R^d} f(x+y)-f(x)-\langle \nabla f(x), h(y)\rangle\, {\rm d}\mu(y)
 \]
 for $x\in \R^d$. Here, the function $h\colon \R^d\to \R^d$ is defined by $h(y)=y$ for $|y|\leq 1$, and $h(y)=0$ whenever $|y| > 1$. In particular, (A1) is satisfied. Assume that
%Then, by the L\'evy-Khintchine formula (see e.g. \cite[Theorem 5.7.3]{MR874529}), for each L\'evy triplet $(b,\Si,\mu)$ the operator $A_{b,\Si,\mu}$ generates a Markovian convolution semigroup $(S_{b,\Si,\mu}(t))_{t\geq 0}$ of linear operators on $\BUC(H)$. In particular, (A1) is satisfied. Assume that
% For any L\'evy triplet $(b,\Si,\mu)$ consider
 %\[
 % \big(A_{b,\Si,\mu}f\big)(x):= \langle b, \nabla f(x)\rangle + \frac{1}{2}{\rm tr} \big(\Si \nabla^2f(x)\big)+\int_{H} f(x+y)-f(x)-\langle \nabla f(x), h(y)\rangle\, {\rm d}\mu(y)
 %\]
 %for $x\in H$ and $f\in D(A_{b,\Si,\mu}):=\big\{g\in \BUC(H)\colon A_{b,\Si,\mu}g\in \BUC(H)\big\}$, where the function $h\colon H\to H$ is defined by $h(y)=y$ for $\|y\|\leq 1$, and $h(y)=0$ whenever $\|y\| > 1$.
%Then, by the L\'evy-Khintchine formula (see e.g. \cite[Theorem 5.7.3]{MR874529}), for each L\'evy triplet $(b,\Si,\mu)$ the operator $A_{b,\Si,\mu}$ generates a Markovian convolution semigroup $(S_{b,\Si,\mu}(t))_{t\geq 0}$ of linear operators on $\BUC(H)$. In particular, (A1) is satisfied. Assume that
 \begin{equation}\label{lev1}
  C:=\sup_{(b,\Si,\mu)\in \La} \bigg(|b|+\|\Si\|_{{\rm tr}}+\int_{\R^d} 1\wedge |y|^2\, {\rm d}\mu(y)\bigg)<\infty,
 \end{equation}
 where $\|\cdot \|_{{\rm tr}}$ denotes the trace norm. We will verify that (A2) is satisfied under \eqref{lev1}. Let
 \[
  D:=\bigg\{f\in \bigcap_{(b,\Si,\mu)\in \La} D(A_{b,\Si,\mu})\, : \, \begin{array}{l} \{A_{b,\Si,\mu}f\, : \, (b,\Si,\mu)\in \La\} \text{ is bounded } \\ \text{ and uniformly equicontinuous}\end{array} \bigg\}.
 \]
%We denote by  $\BUC^2(H)$ the space of all functions which are twice differentiable with bounded uniformly continuous derivatives up to order 2.
Since $\BUC^2(\R^d)$  is dense in $\BUC(\R^d)$, it suffices to show that $\BUC^2(\R^d)\subset D$. Let $f\in \BUC^2(\R^d)$. Then, $f\in D(A_{b,\Si,\mu})$ for any L\'evy triplet $(b,\Si,\mu)\in \La$. In the sequel, we denote by $\|\nabla^2 f(x)\|$ the operator norm of the matrix $\nabla^2 f(x)\colon \R^d\to \R^d$ for all $x\in \R^d$ and by $\|\nabla^2f\|_\infty:=\sup_{x\in \R^d}\|\nabla^2 f(x)\|$. Then, by Taylor's theorem we have
 \begin{align*}
 \big|\big(&A_{b,\Si,\mu}f\big)(x)\big|\leq |b|  |\nabla f(x)|+\tfrac 12\|\Si\|_{{\rm tr}}  \|\nabla^2f(x)\| \\
 & \qquad +\int_{\R^d}\big|f(x+y)-f(x)-\langle \nabla f(x), h(y)\rangle \big|\, {\rm d}\mu(y)\\
   &\leq |b| \|\nabla f\|_\infty+\tfrac 12\|\Si\|_{{\rm tr}}  \|\nabla^2 f\|_\infty+ \max \bigg\{2\| f\|_\infty, \frac{1}{2}\|\nabla^2f\|_\infty\bigg\} \int_{\R^d} 1\wedge |y|^2\, {\rm d}\mu(y)\\
   &\leq 2C \max\big\{\|f\|_\infty,\|\nabla f\|_\infty, \|\nabla^2 f\|_\infty\big\}
 \end{align*}
 for all $x\in \R^d$ and all $(b,\Si,\mu)\in \La$, so that
 \begin{equation}\label{levy1}
  \sup_{(b,\Si,\mu)\in \La} \|A_{b,\Si,\mu}f\|_\infty\leq 2C\max \big\{\|f\|_\infty,\|\nabla f\|_\infty, \|\nabla^2 f\|_\infty\big\}.
 \end{equation}
 Let $\ep>0$. Since $f\in \BUC^2(\R^d)$ there exists $\de>0$ such that for all $x,z\in \R^d$ with $|x-z|\leq \de$ and $\theta \in \{f,\nabla f, \nabla^2 f\}$ it holds
 \begin{align*}
 & 2C\sup_{y\in \R^d }|\theta (x+y)-\theta (z+y)|\leq \ep. %\left(|f(x+y)-f(z+y)|\vee |\nabla f(x+y)-\nabla f(z+y)|\vee |\nabla^2 f(x+y)-\nabla^2 f(z+y)|\right)\\ & \leq \ep.
 \end{align*}
 Let $x,z\in \R^d$ be fixed with $|x-z|\leq \de$ and $g\colon \R^d\to \R$ be defined by
 \[
  g(y):=f(x+y)-f(z+y)
 \]
 for all $y\in \R^d$, so that $g\in \BUC^2(\R^d)$ with $2C\max \big\{\|g\|_\infty,\|\nabla g\|_\infty, \|\nabla^2g\|_\infty\big\}\leq \ep$. By \eqref{levy1} we get
 \begin{align*}
  \big|\big(A_{b,\Si,\mu}f\big)(x)-\big(A_{b,\Si,\mu}f\big)(z)\big|&= \big|\big(A_{b,\Si,\mu} g\big)(0)\big|\leq 2C \max \big\{\|g\|_\infty,\|\nabla g\|_\infty, \|\nabla^2g\|_\infty\big\}\leq \ep
 \end{align*}
 for all $(b,\Si,\mu)\in \La$, which shows that (A2) is satisfied. Therefore, by Theorem \ref{main}
 there exists a sublinear expectation space $(\Om,\FF,\EE)$ and an $\EE$-L\'evy process $(X_t)_{t\geq 0}$ such that for all $f\in \BUC(\R^d)$ the function
 \[
  u(t,x):=\big(u(t)\big)(x):=\EE(f(x+X_t)),\quad t\geq 0,\, x\in \R^d,
 \]
 is a $\BUC^2(\R^d)$-viscosity solution of the fully nonlinear PIDE
 \begin{eqnarray*}
  u_t(t,x)&=&\sup_{(b,\Si,\mu)\in \La} \big(A_{b,\Si,\mu} u(t)\big)(x), \quad (t,x)\in (0,\infty)\times \R^d,\\
  u(0,x)&=&f(x),\quad x\in \R^d.
 \end{eqnarray*}
%Note that \eqref{lev1} is fulfilled whenever $\La$ is finite.
\end{example}

\begin{example}[L\'evy processes on real separable Hilbert spaces]\label{genlevy}
 Let $G=H$ be a real separable Hilbert space. Similar to the previous example, a L\'evy triplet $(b, \Si,\mu)$ consists of a vector $b\in H$, a symmetric positive semidefinite trace class operator $\Si\colon H\to H$ and a L\'evy measure $\mu$ on $H$, i.e. a Borel measure satisfying
 \[
  \mu(\{0\})=0\quad \text{and}\quad \int_H 1\wedge \|y\|^2\, {\rm d}\mu(y)<\infty.
 \]
 By the L\'evy-Khintchine formula (see e.g. \cite[Theorem 5.7.3]{MR874529}), for each L\'evy triplet $(b, \Si,\mu)$ there exists a Markovian convolution semigroup $(S_{b,\la \Si,\mu}(t))_{t\geq 0}$ of linear operators on $\BUC(H)$ with generator $A_{b,\la \Si,\mu}\colon D(A_{b,\la\Si,\mu})\subset \BUC(H)\to \BUC(H)$. In contrast to the previous example the space $\BUC^2(H)$ of all twice differentiable function $H\to \R$ with bounded uniformly continuous derivatives up to order 2 is no longer dense in $\BUC(H)$ (cf. \cite[Section 2.2]{MR874529}). In order to guarantee that $D$ is dense in $\BUC(H)$ we therefore have to keep the convariance operator fixed and also restrict the set of L\'evy measures. We therefore consider a fixed symmetric positive semidefinite trace class operator $Q\colon H\to H$ and denote by $\De_Q:=A_{(0,Q,0)}$ the generator of the heat semigroup with covariance operator $Q$. Then, $D_Q:=D(\De_Q)\cap \BUC^1(H)$ is dense in $\BUC(H)$ (see e.g. \cite[Theorem 3.5.3]{MR1985790}) and for $f\in D_Q$, the function $A_{b,Q,\mu}f$ is given by
 \[
  \big(A_{b,Q,\mu}f\big)(x)= \langle b, \nabla f(x)\rangle + \De_Q f(x)+\int_{H} f(x+y)-f(x)-\langle \nabla f(x), h(y)\rangle\, {\rm d}\mu(y)
 \]
 for $x\in H$. Here, the function $h\colon H\to H$ is defined by $h(y)=y$ for $\|y\|\leq 1$, and $h(y)=0$ whenever $\|y\| > 1$. Now, let $\La$ be a set of triplets $(b,\la,\mu)$, where $b\in H$, $\la\geq 0$ and $\mu$ is a L\'evy measure, satisfying
 \begin{equation}\label{levH1}
  C:=\sup_{(b,\la,\mu)\in \La} \bigg(\|b\|+\la +\int_{H} 1\wedge \|y\|\, {\rm d}\mu(y)\bigg)<\infty,
 \end{equation}
 where $\|\cdot \|_{{\rm tr}}$ denotes the trace norm. Then, (A1) is satisfied and, in the sequel, we will verify that (A2) is satisfied under \eqref{levH1}. Since $D_\Q$ is dense in $\BUC(H)$, it suffices to show that $D_Q\subset D$. Let $f\in D_Q$. Then, $f\in D(A_{b,\Si,\mu})$ for any L\'evy triplet $(b,\Si,\mu)\in \La$. We denote by $\|\nabla^2 f(x)\|$ the operator norm of the bounded linear operator $\nabla^2 f(x)\colon H\to H$ for all $x\in H$ and by $\|\nabla^2f\|_\infty:=\sup_{x\in H}\|\nabla^2 f(x)\|$. Then, by Taylor's theorem we have
 \begin{align*}
 \big|\big(&A_{b,\la Q,\mu}f\big)(x)\big|\leq \|b\|  \|\nabla f(x)\|+\la \|\De_Q f(x)\| \\
 & \qquad +\int_{H}\big|f(x+y)-f(x)-\langle \nabla f(x), h(y)\rangle \big|\, {\rm d}\mu(y)\\
   &\leq \|b\| \|\nabla f\|_\infty + \la \|\De_Q f(x)\|+ \bigg(2\| f\|_\infty+\|\nabla f\|_\infty\bigg) \int_{H} 1\wedge \|y\|\, {\rm d}\mu(y)\\
   &\leq 2C \max\big\{\|f\|_\infty + \|\nabla f\|_\infty, \|\De_Q f\|_\infty\big\}
 \end{align*}
 for all $x\in H$ and all $(b,\la,\mu)\in \La$, so that
 \begin{equation}\label{levyH1}
  \sup_{(b,\la,\mu)\in \La} \|A_{b,\la \Si,\mu}f\|_\infty\leq 2C\max \big\{\|f\|_\infty+\|\nabla f\|_\infty, \|\De_Q f\|_\infty\big\}.
 \end{equation}
 Let $\ep>0$. Since $f\in \BUC^2(H)$ there exists $\de>0$ such that for all $x,z\in H$ with $\|x-z\|\leq \de$ and $\theta \in \{f,\nabla f, \De_Q f\}$ it holds
 \begin{align*}
 & 4C\sup_{y\in H }\|\theta (x+y)-\theta (z+y)\|\leq \ep. %\left(|f(x+y)-f(z+y)|\vee |\nabla f(x+y)-\nabla f(z+y)|\vee |\nabla^2 f(x+y)-\nabla^2 f(z+y)|\right)\\ & \leq \ep.
 \end{align*}
 Let $x,z\in H$ be fixed with $\|x-z\|\leq \de$ and $g\colon H\to \R$ be defined by
 \[
  g(y):=f(x+y)-f(z+y)
 \]
 for all $y\in H$, so that $g\in \De_Q$ with $2C\max \big\{\|g\|_\infty+\|\nabla g\|_\infty, \|\De_Q g\|_\infty\big\}\leq \ep$. By \eqref{levyH1} we get
 \begin{align*}
  \big|\big(A_{b,\Si,\mu}f\big)(x)-\big(A_{b,\Si,\mu}f\big)(z)\big|&= \big|\big(A_{b,\Si,\mu} g\big)(0)\big|\leq 2C \max \big\{\|g\|_\infty+\|\nabla g\|_\infty, \|\De_Q g\|_\infty\big\}\\
  &\leq \ep
 \end{align*}
 for all $(b,\la ,\mu)\in \La$, which shows that (A2) is satisfied. Therefore, by Theorem \ref{main}
 there exists a sublinear expectation space $(\Om,\FF,\EE)$ and an $\EE$-L\'evy process $(X_t)_{t\geq 0}$ such that for all $f\in \BUC(H)$ the function
 \[
  u(t,x):=\big(u(t)\big)(x):=\EE(f(x+X_t)),\quad t\geq 0,\, x\in H,
 \]
 is a $\BUC^2(H)$-viscosity solution of the fully nonlinear PIDE
 \begin{eqnarray*}
  u_t(t,x)&=&\sup_{(b,\Si,\mu)\in \La} \big(A_{b,\Si,\mu} u(t)\big)(x), \quad (t,x)\in (0,\infty)\times H,\\
  u(0,x)&=&f(x),\quad x\in H.
 \end{eqnarray*}
\end{example}

\begin{example}[L\'evy processes on the $d$-dimensional Torus $\T^d$]\label{genlevtorus}
 Let $G=\T^d$, where $\T^d$ denotes the $d$-dimensional Torus represented by $(-\pi,\pi]^d$. We say that $(b,\Si,\mu,\nu)$ is a \textit{L\'evy quadruple} if $b\in \R^d$, $\Si\in \R^{d\times d}$ is a symmetric positive semidefinite matrix, $\mu$ is a positive finite measure on $\T^d$ and $\nu$ is a positive measure on $\T^d$ with $\nu(\{0\})=0$ and
 \[
  \int_{\T^d}|y|^2\, {\rm d}\nu(y)<\infty.
 \]
 This definition is motivated by the L\'evy-It\^{o} decomposition of a L\'evy process in $\R^d$, where the L\'evy measure is further decomposed into a measure describing the large jumps (here: $\mu$) and another measure describing the small jumps (here: $\nu$). For each L\'evy quadruple $(b,\Si,\mu,\nu)$ we consider
  \begin{align*}
  \big(A_{b,\Si,\mu,\nu}f\big)(x)&:=b\cdot \nabla f(x)+ \frac{1}{2}{\rm tr} \big(\Si \nabla^2f(x)\big)+\int_{\T^d} f(x+y)-f(x)\, {\rm d}\mu(y)\\
  &\quad\; +\int_{\T^d} f(x+y)-f(x)-\nabla f(x) \cdot y\, {\rm d}\nu(y)
 \end{align*}
 for $x\in \T^d$ and $f\in D(A_{b,\Si,\mu,\nu}):=\big\{g\in C(\T^d)\, : \, A_{b,\Si,\mu,\nu} g\in C(\T^d)\big\}$.
Recall that $C(\T^d)$ can be identified with the set of all $2\pi$-periodic continuous functions on $\R^d$.

 We start by proving that for any L\'evy quadruple $(b,\Si,\mu,\nu)$ the operator $A_{b,\Si,\mu,\nu}$ generates a Markovian convolution semigroup $(S_{b,\Si,\mu,\nu}(t))_{t\geq 0}$ of linear operators on $\T^d$. In order to do so, let $(b,\Si,\mu,\nu)$ be a L\'evy quadruple and define
 \[
  \eta f:=\int_{(-\pi,\pi]^d} f(y)\, {\rm d}\nu(y)+\sum_{k\in \Z^d\sm \{0\}} \la_k \int_{(-\pi,\pi]^d} f(x+2\pi k)\, {\rm d}\mu(y)
 \]
for $f\in \LL^\infty(\R^d)$, where $\la_k\geq 0$ for all $k\in \Z^d\sm\{0\}$ and $\sum_{k\in \Z^d\sm\{0\}}\la_k=1$. Then, $\eta$ is a L\'evy measure on $\R^d$, see e.g.~Example \ref{genlevy} with $H=\R^d$, so that $(b,\Si,\eta)$ is a L\'evy triplet. Hence, there exists a Markovian convolution semigroup $(S_{b,\Si,\eta}(t))_{t\geq 0}$ of linear operators on $\R^d$ with generator $A_{b,\Si,\eta}$. As the space $C(\T^d)$ of all $2\pi$-periodic continuous functions is a closed subspace of $\BUC(\R^d)$, which is invariant under $S_{b,\Si,\eta}(t)$ for all $t\geq 0$, we obtain that
\[
 S(t):=\big(S_{b,\Si,\eta}(t)\big)|_{C(\T^d)}, \quad t\geq 0
\]
defines a Markovian convolution semigroup of linear operators on $C(\T^d)$. Let $A$ denote the generator of the semigroup $(S(t))_{t\geq 0}$. As $C(\T^d)$ is a closed subspace of $\BUC(\R^d)$ and $\sum_{k\in \Z^d\sm \{0\}}\la_k=1$, we get that
\[
 Af=A_{b,\Si,\eta}f=A_{b,\Si,\mu,\nu}f
\]
for all $f\in D(A)$. In particular, $A_{b,\Si,\mu,\nu}$ is the generator of $(S(t))_{t\geq 0}$.\\

Now, let $\La$ be a set of L\'evy quadruples with
 \begin{equation}\label{toruslev}
  \sup_{(b,\Si,\mu,\nu)\in \La} \bigg(|b|+|\Si|+\mu(\T^d)+\int_{\T^d}|y|^2\, {\rm d}\nu(y)\bigg)<\infty.
 \end{equation}
 Then, in a similar way as in Example \ref{genlevy}, one can show that for every $f\in C^2(\T^d)=\BUC^2(\T^d)$, the set $\{A_{b,\Si,\mu,\nu}f\colon (b,\Si,\mu,\nu)\in \La\}$ is bounded and equicontinuous. Therefore, the assumptions (A1) and (A2) are satisfied. Hence, there exists a nonlinear expectation space $(\Om,\FF,\EE)$ and an $\EE$-L\'evy process $(X_t)_{t\geq 0}$ such that for all $f\in C(\T^d)$ the function
 \[
  u(t,x):=\big(u(t)\big)(x):=\EE(f(x+X_t)), \quad t\geq 0, x\in \T^d,
 \]
 is a $C^2(\T^d)$-viscosity solution of the fully nonlinear PIDE
 \begin{eqnarray*}
  u_t(t,x)&=&\sup_{(b,\Si,\mu,\nu)\in \La} \big(A_{b,\Si,\mu,\nu} u(t)\big)(x), \quad (t,x)\in (0,\infty)\times \T^d,\\
  u(0,x)&=&f(x),\quad x\in \T^d.
 \end{eqnarray*}
We also refer to Hunt \cite{MR0079232} for a L\'evy-Khintchine formula on compact Lie groups.
\end{example}

\begin{example}
 Let $A\colon D(A)\subset \BUC(G)\to \BUC(G)$ be the generator of a Markovian convolution semigroup $(S(t))_{t\geq 0}$ and $\La\subset [0,\infty)$ bounded. For all $\la\in \La$ let $A_\la:=\la A$ and $S_\la(t):=S(\la t)$ for $t\geq 0$. Then, the assumptions (A1) and (A2) are satisfied with $D=D(A)$. Hence, there exists a nonlinear expectation space $(\Om,\FF,\EE)$ and an $\EE$-L\'evy process $(X_t)_{t\geq 0}$ such that for all $u_0\in \BUC(G)$ the function
 \[
  u(t,x):=\EE(f(x+X_t)),\quad t\geq 0, x\in G
 \]
 is a $D(A)$-viscosity solution of the fully nonlinear PDE
 \begin{eqnarray*}
  u_t(t,x)&=&\sup_{\la\in \La} \big(\la A u(t)\big)(x), \quad (t,x)\in (0,\infty)\times G,\\
  u(0,x)&=&f(x),\quad x\in G.
 \end{eqnarray*}
 For instance, if $A$ is the generator of a cylindrical Wiener process on a separable Hilbert space, one obtains
an $\EE$-L\'evy process which can be viewed as a cylindrical $G$-Wiener process.
\end{example}

\begin{example}\label{cex1}%[Approximating Brownian Motion by compound Poisson processes]
 For all $h>0$ let $\mu_h:=\frac{1}{h^2}\de_h$ and consider $\La:=\big\{(0,0,\mu_h)\, :\, h>0\big\}$ in Example \ref{genlevy} with $d=1$. Then, we have that
 \[
  \sup_{h>0} \int_\R 1\wedge |y|^2\, {\rm d}\mu_h(y)=\sup_{h>0} \frac{1}{h^2} \int_\R |y|^2\, {\rm d}\de_h(y)=1,
 \]
so that the assumptions (A1) and (A2) are satisfied. However, the second condition in \eqref{cond:NN} does not hold. By Example~\ref{genlevy}, there exists a sublinear expectation space $(\Om,\FF,\EE)$ and an $\EE$-L\'evy process $(X_t)_{t\geq 0}$ such that for all $f\in \BUC(\R)$ the function
 \[
  u(t,x):=\EE(f(x+X_t)),\quad t\geq 0, x\in \R,
 \]
 is a $\BUC^2(\R)$-viscosity solution of the fully nonlinear PIDE
 \begin{eqnarray*}
  u_t(t,x)&=&\sup_{h>0} \big(A_{0,0,\mu_h} u(t)\big)(x), \quad (t,x)\in (0,\infty)\times \R,\\
  u(0,x)&=&f(x),\quad x\in \R.
 \end{eqnarray*}
Note that $\big\|A_{(0,0,\mu_h)}f-\frac{1}{2}f''\big\|_\infty=\sup_{x\in \R}\big|\frac{f(x+h)-f(x)-f'(x)h}{h^2}- \frac{1}{2}f''(x)\big|\to 0$
as $h\searrow 0$ for all $f\in \BUC^2(\R)$.
\end{example}

\begin{example}[Cauchy distributed jumps]\label{cex2}
 For $\gamma>0$ let $\delta_\gamma$ be given by
 \[
  \mu_\gamma\big((-\infty, b)\big):=\frac{\gamma}{\pi}\int_{-\infty}^b\frac{1}{y^2+\gamma^2}\, {\rm d}y=\frac{1}{2}+\arctan\bigg(\frac{b}{\gamma}\bigg)
 \]
 for $b\in \R$. Let $\Gamma\subset (0,\infty)$. Then, we have that
 \[
  \sup_{\gamma\in \Gamma}\int_{\R}1\wedge |y|^2\, {\rm d}\mu_\gamma (y)\leq \sup_{\gamma\in \Gamma}\mu_\gamma(\R)=1,
 \]
so that (A1) and (A2) are satisfied, but the first condition in \eqref{cond:NN} is violated. %in \cite{NutzNeuf}.
By Example \ref{compois} with $G=\R$ (using the notation from Example \ref{genlevy}), there exists a sublinear expectation space $(\Om,\FF,\EE)$ and an $\EE$-L\'evy process $(X_t)_{t\geq 0}$ such that for all $f\in \BUC(\R)$ the function
 \[
  u(t,x):=\EE(f(x+X_t)),\quad t\geq 0, x\in \R,
 \]
 is the unique classical solution of the fully nonlinear PIDE
 \begin{eqnarray*}
  u_t(t,x)&=&\sup_{\gamma\in \Gamma} \big(A_{0,0,\mu_\gamma} u(t)\big)(x), \quad (t,x)\in (0,\infty)\times \R,\\
  u(0,x)&=&f(x),\quad x\in \R
 \end{eqnarray*}
 with $u\in C^1([0,\infty);\BUC(\R))$.
\end{example}

\section{Proof of Theorem \ref{equivlevy}}\label{sec2}

\begin{proof}
Let $(\SS(t))_{t\geq 0}$ be a convex Markovian convolution semigroup which is continuous from above.
By Remark \ref{rem:kernel}, every $\SS(t)$ has a unique extension to a convex kernel
$\SS(t):C_b(G)\to C_b(G)$ which is continuous from above. Then the family
\[\EE_{s,t}(x,f):=(\SS(t-s)f)(x),\quad 0\le s < t\]
of convex kernels on $C_b(G)$ is continuous from above and satisfies the Chapman-Kolmogorov equations, i.e.
\[\EE_{s,t}(x,\EE_{t,u}(\cdot, f))=\EE_{s,u}(x,f)\] for all $0\le s < t < u$, $f\in C_b(G)$
and $x\in G$. Hence, it follows from \cite[Theorem 5.6]{DKN} that there exists a convex expectation space $(\Omega,\FF,\EE)$ and a family of random variables $X_t:\Omega\to G$, $t\ge 0$, such that \[\EE(f(X_0))=f(0)\] for all $f\in C_b(G)$ and
\[
\EE\big( f(X_{t_1},\dots, X_{t_n}, X_t)\big)=\EE \big(\EE_{s,t}\left(X_s,f(X_{t_1},\dots, X_{t_n},\, \cdot\, )\right)\big)
\]
for all $0\le s < t$,  $n\in\mathbb{N}$, $0\le t_1<\ldots < t_n\le s$, and $f\in C_b(G^{n+1})$.
Recall that $(x_{t_1},\dots x_{t_n},x_s)\mapsto\EE_{s,t}\left(x_s,f(x_{t_1},\dots, x_{t_n},\, \cdot\, )\right)$ is continuous, see e.g.~\cite[Proposition 5.5]{DKN} with $S=G$ and $T=G^n$. Next, we verify that $(X_t)_{t\ge 0}$ is an $\EE$-L\'evy process. For $f\in C_b(G)$ and $s,t\ge 0$ one has
\[
\EE\big(f(X_t)\big)=\EE\big(\EE_{0,t}(X_0,f)\big)=\EE\big((\SS(t) f)(X_0)\big)=(\SS(t)f)(0)
\]
and
\begin{align}
&\EE\big(f(X_{s+t}-X_s)\big)=\EE\big(\tilde f(X_s,X_{s+t})\big)=\EE\big(\EE_{s,s+t}(X_s,\tilde f(X_s,\, \cdot\, )\big)\nonumber\\
&=\EE\big((\SS(t)\tilde f(X_s,\, \cdot\, ))(X_s)\big)=\EE\big((\SS(t) f)(0)\big)=(\SS(t) f)(0),\label{prooflevy1}
\end{align}
where $\tilde f(x,y):=f(y-x)$ and $(\SS(t)\tilde f(x,\, \cdot\, ))(x)=(\SS(t)f_{-x})(x)=(\SS(t)f)(0)$ because $\SS(t)$ is a Markovian convolution. This shows that the random variables $X_{s+t}-X_s$ and $X_t$ have the same distribution under $\EE$. Moreover, for $s,t\ge 0$, $0\le t_1<\ldots<t_n\le s$ and $f\in C_b(G^{n+1})$, it follows by \eqref{prooflevy1} that
\[(\SS(t) f(x_{t_1},\ldots,x_{t_n},\,\cdot\, ))(0)=\EE\big(f(x_{t_1},\ldots,x_{t_n},X_{s+t}-X_s)\big)\]
for all $x_{t_1},\dots,x_{t_n}\in G$ and therefore,
\begin{align*}
&\EE\big(f(X_{t_1},\dots,X_{t_n},X_{s+t}-X_s)\big)=\EE\big(f_{-X_s}(X_{t_1},\dots,X_{t_n},X_{s+t})\big)\\
&=\EE\big(\EE_{s,s+t}\big(X_s,f_{-X_s}(X_{t_1},\dots,X_{t_n},\, \cdot\,)\big)\big)\\
&=\EE\big(\big(\SS(t) f_{-X_s}(X_{t_1},\dots,X_{t_n},\, \cdot\, )\big)(X_s)\big)\\
&=\EE\big(\big(\SS(t) f(X_{t_1},\dots,X_{t_n},\, \cdot\, )\big)(0)\big)\\
&=\EE\big(\EE\big(f(x_{t_1},\dots,x_{t_n},X_{s+t}-X_s)\big)|_{x_{t_1}=X_{t_1},\dots ,x_{t_n}=X_{t_n}}\big)
\end{align*}
which shows that $X_{s+t}-X_s$ is independent of $(X_{t_1},\dots,X_{t_n})$.
It remains to verify that $X_t\to X_0$ in distribution as $t\searrow 0$. To that end, fix $f\in C_b(G)$ and notice that there exists an increasing sequence $(f_n)$ in $\BUC(G)$ which converges pointwise to $f$. By Dini's theorem it follows that the continuous functions $t\mapsto\big(\SS(t)f_n\big)(0)$ converge uniformly on $[0,1]$ to the continuous function $t\mapsto\big(\SS(t)f\big)(0)$. Hence, for every $\varepsilon>0$ there exists some $n_0\in\N$ such that
\begin{align*}
  |\EE(f(X_t))-f(0)| &\leq \big|\big(\SS(t)f\big)(0)-\big(\SS(t)f_{n_0}\big)(0)\big|+\|\SS(t)f_{n_0}-f_{n_0}\|_\infty\\
  &\quad + |f_{n_0}(0)-f(0)|\\
  &\leq \varepsilon, \quad \text{as }t\searrow 0.
 \end{align*}
 Conversely, let $(X_t)_{t\geq 0}$ be an $\EE$-L\'evy process on a convex expectation space $(\Om,\FF,\EE)$.
Then, the family $(\SS(t))_{t\geq 0}$ defined by
\[
  \big(\SS(t)f\big)(x):=\EE(f(x+X_t))
 \]
 for $f\in \BUC(G)$ and $x\in G$ is a convex Markovian convolution semigroup. Indeed, for $f\in \BUC(G)$, $x\in G$ and $t\ge 0$, it holds that
\[\big(\SS(t)f_x\big)(0)=\EE(f(x+X_t))=\big(\SS(t)f\big)(x),\]
 so that $\SS(t)$ is a convex Markovian convolution which satisfies
\[\big(\SS(0)f\big)(x)=\EE(f(x+X_0))=\EE(f_x(X_0))=f_x(0)=f(x).\] Moreover, since $\EE(f(x+(X_{t+s}-X_s)))=\EE(f(x+X_t))=\big(\SS(t)f\big)(x)$
for all $s,t\geq 0$, $f\in \BUC(G)$, $x\in G$, and $X_{t+s}-X_s$ is independent of $X_s$ we obtain
 \begin{align*}
  \big(\SS(t+s)f\big)(0)=\EE(f(X_{t+s}))&=\EE(f(X_s+(X_{t+s}-X_s)))\\&=\EE\big(\big(\SS(t)f\big)(X_s)\big)=\big(\SS(s)\SS(t)f\big)(0).
 \end{align*}
Since $\SS(s)$ and $\SS(t)$ are Markovian convolutions we conclude the semigroup property $\SS(t+s)=\SS(s)\SS(t)$. It remains to show that $\lim_{t\searrow 0}\|\SS(t)f-f\|_\infty=0$ for all $f\in \BUC(G)$.
To do so, we first show that for every $c\geq 0$ and $\de>0$ we have that
 \begin{equation}\label{help1}
  \EE\big(c 1_{G\sm B(0,\de)}(X_t)\big)\to 0
 \end{equation}
 as $t\searrow 0$. Let $\varphi\colon G\to \R$ be defined by
 \[
  \varphi(y):=\frac{c}{\de} \big(d(y,0)\wedge \de\big)
 \]
 for $y\in G$. Then, $\varphi\in C_b(G)$ with $\varphi\geq 0$, $\varphi(0)=0$ and $\varphi(y)=c$ for all $y\in G\sm B(0,\de)$, so that
 \[
  0\leq \EE\big(c 1_{G\sm B(0,\de)}(X_t)\big)=\EE\big(\varphi(X_t)1_{G\sm B(0,\de)}(X_t)\big)\leq \EE(\varphi(X_t))\to 0
 \]
 as $t\searrow 0$. Now, fix $\ep >0$ and $f\in \BUC(G)$. Then there exists some $\de>0$ such that $|f(x+y)-f(x)|< \frac{\ep}{2}$
 for all $x,y\in G$ with $d(y,0)< \de$. For each $x\in G$ let $g_x\in \LL^\infty(G)$ be given by $y\mapsto 1_{B(0,\de)}(y)(f(x+y)-f(x))$. Then $\|g_x\|_\infty\leq \frac{\ep}{2}$ for all $x\in G$, and by \eqref{help1} there exists some $t_0>0$ such that
 \[
  \EE\big(2\|f\|_\infty 1_{G\sm B(0,\de)}(X_t)\big)\leq \frac{\ep}{2}
 \]
 for all $0<t<t_0$. Since $\EE\colon \LL^\infty(\Om,\FF)\to \R$ is $1$-Lipschitz continuous, we get
 \begin{align*}
  \big|\big(\SS(t)f\big)(x)-f(x)\big|&=\big|\EE\big(f(x+X_t)-f(x)\big)\big|\leq \EE\big(\left|f(x+X_t)-f(x)\right|\big)\\
  &\leq \EE\big(g_x(X_t)+2\|f\|_\infty 1_{G\sm B(0,\de)}(X_t)\big)\\
  &\leq \|g_x\|_\infty+\EE\big(2\|f\|_\infty  1_{G\sm B(0,\de)}(X_t)\big)\leq \ep
 \end{align*}
 for all $0<t<t_0$, which shows that $\|\SS(t)f-f\|_\infty\leq \ep$ for all $0<t<t_0$.

\end{proof}

\section{Randomizing linear semigroups and the proof of Theorem \ref{main}}\label{sec3}

Throughout this section all Markovian convolutions are defined on $\BUC(G)$. We assume that $\{A_\lambda: \lambda\in\Lambda\}$ is a given non-empty family of operators which satisfies the assumptions (A1) and (A2).
Recall that $A_\lambda$ generates a  Markovian convolution semigroup $(S_\lambda(t))_{t\ge 0}$ of linear operators, and the domain
 \[
 D=\bigg\{ f\in \bigcap_{\la\in \La}D(A_\la) : \big\{A_\la f\colon \la\in \La\big\}\;\text{is bounded and uniformly equicontinuous}\bigg\}
 \]
 is dense in $\BUC(G)$.

We consider finite partitions $P:= \{\pi\subset[0,\infty): 0\in\pi, \, \pi\text{ finite}\}$. For a partition $\pi=\{t_0,t_1,\dots,t_m\}\in P$ with $0=t_0< t_1< \ldots < t_m$ we set $|\pi|_\infty := \max_{j=1,\dots,m} (t_j-t_{j-1})$. The set of partitions with end-point $t$ is denoted by $P_t$, i.e.~$P_t := \{\pi \in P: \max \pi = t\}$.

For $f\in BUC(G)$ and $t\ge 0$, we  define
\[
 \big(J_t f\big)(x):=\sup_{\la\in \La} \big(S_\la(t)f\big)(x), \quad x\in G,
\]
and for a partition $\pi\in P$, we set
\[ J_\pi f := J_{t_1-t_0} \cdots J_{t_m-t_{m-1}} f,\]
where we assume that $\pi=\{t_0,t_1,\dots,t_m\}$ with $0=t_0< t_1< \ldots < t_m$, and $J_{\{0\}}f := f$.
Note that $J_t = J_{\{0,t\}}$ for $t>0$. Moreover, since $S_\la(t)$ is continuous from below for all $\la\in\Lambda$
and $t\ge 0$, it follows that $J_\pi$ is continuous from below for all $\pi\in P$.

Let $f\in D$. Then, by definition of $D$, the family $(A_\la f)_{\la\in \La}$ is bounded and, throughout the rest of this section, we denote
$$L_f := \sup_{\lambda\in\Lambda} \|A_\lambda f\|_{\infty} <\infty.$$

\begin{lemma}\label{lip12}
 \begin{enumerate}[a)]
 \item  $J_\pi$ is a sublinear Markovian convolution for all $\pi\in P$.
 \item Let $f\in D$. Then,
 \begin{align}
   \|J_{t_1} f - J_{t_2} f\|_\infty & \le L_f |t_1-t_2|, \quad t_1,t_2\ge 0,\label{rd01}\\
   \| J_\pi f - f\|_\infty & \le L_f \, t, \quad \pi\in P_t,\, t>0.\label{rd02}
 \end{align}
 \end{enumerate}
\end{lemma}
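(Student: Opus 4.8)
The two parts have quite different flavors, so I would handle them separately and reuse part a) inside the estimates of part b).

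For part a), the plan is to show that the class of sublinear Markovian convolutions is closed under (i) the supremum operation defining $J_t$ and (ii) composition, after which the claim for a general $\pi$ follows by induction on the number of points in the partition. First I would check that $J_t$ itself is a sublinear Markovian convolution. Each $S_\la(t)$ is a linear Markovian convolution, hence monotone, translation-covariant (i.e.\ $(S_\la(t)f)(x)=(S_\la(t)f_x)(0)$), normalized on constants, and continuous from below. Taking the pointwise supremum over $\la\in\La$ preserves monotonicity and the normalization $J_t\alpha=\alpha$ for constants, and it produces a sublinear (positively homogeneous and subadditive) functional because a supremum of linear functionals is sublinear. Translation covariance passes to the supremum verbatim, since $(S_\la(t)f)(x)=(S_\la(t)f_x)(0)$ for every $\la$ gives $(J_tf)(x)=(J_tf_x)(0)$. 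Continuity from below of $J_t$ is already asserted in the text (the supremum of an upward-filtered family that is continuous from below is continuous from below). The remaining point is that $J_t$ maps $\BUC(G)$ into $\BUC(G)$: boundedness is clear from $\|J_tf\|_\infty\le\|f\|_\infty$, and uniform continuity follows from the contraction estimate $|(J_tf)(x)-(J_tf)(z)|\le\sup_\la|(S_\la(t)f)(x)-(S_\la(t)f)(z)|=\sup_\la|(S_\la(t)g)(0)|\le\|g\|_\infty$ where $g=f_x-f_z$, so that the modulus of continuity of $J_tf$ is controlled by that of $f$. Finally, composition of sublinear Markovian convolutions is again one (monotonicity, constant-preservation, sublinearity and translation covariance are all stable under composition), so $J_\pi=J_{t_1-t_0}\cdots J_{t_m-t_{m-1}}$ is a sublinear Markovian convolution; this is the induction.

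For part b), I would first establish \eqref{rd01} for a single $J_t$ applied to $f\in D$, and then bootstrap to \eqref{rd02} for a general partition using the contraction property and a telescoping argument. The key one-step estimate is
\[
 \|S_\la(t)f-f\|_\infty\le \int_0^t\|S_\la(s)A_\la f\|_\infty\,{\rm d}s\le t\,\|A_\la f\|_\infty\le t\,L_f,
\]
which uses the fundamental identity $S_\la(t)f-f=\int_0^t S_\la(s)A_\la f\,{\rm d}s$ valid for $f\in D(A_\la)$, together with the fact that each $S_\la(s)$ is a linear contraction (being a Markovian convolution, so $\|S_\la(s)h\|_\infty\le\|h\|_\infty$). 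Taking the supremum over $\la$ and using the elementary inequality $|\sup_\la a_\la-\sup_\la b_\la|\le\sup_\la|a_\la-b_\la|$ yields $\|J_tf-f\|_\infty\le t\,L_f$; the two-time version \eqref{rd01} follows by applying this to the difference on $[t_1,t_2]$ via the semigroup law, noting that $J_{t_2}f$ and $J_{t_1}f$ differ by one factor $J_{|t_1-t_2|}$ inserted between contractions, and again invoking the contraction property of the $J$'s proved in part a) to discard the surrounding factors.

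For \eqref{rd02}, I would telescope across the partition $\pi=\{t_0,\dots,t_m\}\in P_t$: writing
\[
 J_\pi f-f=\sum_{j=1}^m \big(J_{t_1-t_0}\cdots J_{t_{j-1}-t_{j-2}}\big)\big(J_{t_j-t_{j-1}}f-f\big),
\]
each outer block is a composition of $J$'s, hence a contraction by part a), so
\[
 \|J_\pi f-f\|_\infty\le\sum_{j=1}^m\|J_{t_j-t_{j-1}}f-f\|_\infty\le\sum_{j=1}^m (t_j-t_{j-1})L_f=t\,L_f.
\]
The main obstacle I anticipate is not the telescoping itself but justifying the integral representation $S_\la(t)f-f=\int_0^t S_\la(s)A_\la f\,{\rm d}s$ in this \emph{nonlinear} setting: the operators $S_\la(t)$ are linear (that is given by (A1)), so the standard semigroup calculus applies to each fixed $\la$, but one must be careful that the resulting bound $t\,\|A_\la f\|_\infty$ is uniform in $\la$, which is exactly where the definition of $D$ (boundedness of $\{A_\la f:\la\in\La\}$, giving finiteness of $L_f$) is used. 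Once uniformity in $\la$ is secured, passing the estimate through the supremum and through the contraction property from part a) is routine.
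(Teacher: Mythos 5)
Your part a) and your treatment of \eqref{rd02} follow essentially the same route as the paper: the supremum of the linear Markovian convolutions $S_\la(t)$ is a sublinear Markovian convolution, the property is stable under composition, and \eqref{rd02} is obtained by peeling off one factor at a time and using that the remaining composition is $1$-Lipschitz together with the one-step bound $\|J_hf-f\|_\infty\le L_f h$. (One cosmetic caveat there: the displayed telescoping \emph{identity} $J_\pi f-f=\sum_j(\cdots)(J_{t_j-t_{j-1}}f-f)$ is false for nonlinear operators, since $J(g)-J(f)\neq J(g-f)$ in general; what you actually need and what does hold is the chain of norm inequalities $\|J_{\pi_j}f-J_{\pi_{j-1}}f\|_\infty\le\|J_{t_j-t_{j-1}}f-f\|_\infty$ for the partial compositions $\pi_j=\{t_0,\dots,t_j\}$, followed by the triangle inequality. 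This is exactly the paper's induction, unrolled.)

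There is, however, a genuine flaw in your derivation of \eqref{rd01}. You propose to get the two-time estimate ``via the semigroup law, noting that $J_{t_2}f$ and $J_{t_1}f$ differ by one factor $J_{|t_1-t_2|}$ inserted between contractions.'' The family $(J_t)_{t\ge 0}$ does \emph{not} satisfy a semigroup law: since $S_\la(t+s)f=S_\la(t)S_\la(s)f\le S_\la(t)J_sf$, one only gets the one-sided inequality $J_{t+s}f\le J_tJ_sf$ (this failure of equality is precisely why the Nisio iteration over partitions is needed at all), so $J_{t_2}f$ is not of the form $J_{t_1}J_{t_2-t_1}f$ and the step as described does not go through. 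The repair is immediate and uses only a tool you already invoked: apply the elementary inequality $\bigl|\sup_\la a_\la-\sup_\la b_\la\bigr|\le\sup_\la|a_\la-b_\la|$ pointwise with $a_\la=(S_\la(t_1)f)(x)$ and $b_\la=(S_\la(t_2)f)(x)$, which gives
\[
\|J_{t_1}f-J_{t_2}f\|_\infty\le\sup_{\la\in\La}\|S_\la(t_1)f-S_\la(t_2)f\|_\infty\le L_f|t_1-t_2|,
\]
the last step being your integral representation $S_\la(t_2)f-S_\la(t_1)f=\int_{t_1}^{t_2}S_\la(s)A_\la f\,{\rm d}s$ and the contractivity of $S_\la(s)$. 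This is exactly how the paper argues; with that substitution your proof is complete.
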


\begin{proof}
\begin{enumerate}[a)]
 \item Since $S_\lambda(t)$ is a linear Markovian convolution for all $\lambda\in\Lambda$, $J_t$ is a sublinear Markovian convolution for all $t\ge 0$. As this property is preserved under compositions, the same holds for $J_\pi$.

 \item Let $f\in D$. For $t_1,t_2\ge 0$, $x\in G$, and $\lambda_0\in\Lambda$ we have that
  \begin{align*}
  \big(S_{\la_0}(t_1)f\big)(x)-\big(J_{t_2}f\big)(x)&\leq \big(S_{\la_0}(t_1)f\big)(x)-\big(S_{\la_0}(t_2)f\big)(x)
\\&\leq \|S_{\la_0}(t_1)f-S_{\la_0}(t_2)f\|_\infty
  \\&\leq \sup_{\la\in \La}\|S_\la(t_1)f-S_\la(t_2)f\|_\infty
  \end{align*}
  and therefore, taking the supremum over $\la_0\in \La$,
  \[
   \big(J_{t_1}f\big)(x)-\big(J_{t_2}f\big)(x)\leq \sup_{\la\in \La}\|S_\la(t_1)f-S_\la(t_2)f\|_\infty.
  \]
By symmetry and taking the supremum over all $x\in G$, we thus get that
 \[
  \|J_{t_1}f-J_{t_2}f\|_\infty\leq \sup_{\la\in \La}\|S_\la(t_1)f-S_\la(t_2)f\|_\infty.
 \]
 Let $\la \in \La$ and w.l.o.g. let $t_1<t_2$. Then, as $f\in D(A_\lambda)$, it follows that (see, e.g.~\cite[Lemma~II.1.3]{MR1721989})
  \begin{align*}
  \|S_\la(t_1)f-S_\la(t_2)f\|_\infty&=\bigg\|\int_{t_1}^{t_2}S_\la(s)A_\la f\, {\rm d}s\bigg\|_\infty\leq \int_{t_1}^{t_2}\|S_\la(s)A_\la f\|_\infty\, {\rm d}s\\
  &\leq (t_2-t_1) \sup_{0\leq s\leq t}\|S_\la (s)A_\la f\|_\infty\leq (t_2-t_1)\|A_\la f\|_\infty\\
  &\leq L_f|t_1-t_2|
 \end{align*}
 for all $\la\in \La$. Here, we used the fact that $\|S_\la (s)g\|_\infty\leq \|g\|_\infty$ for all $\la\in \La$, $s\geq 0$ and $g\in \BUC(G)$. Taking the supremum over all $\la\in \La$, we obtain \eqref{rd01}.

 To show \eqref{rd02}, let $\pi=\{0, t_1,\dots, t_{m-1}, t\}\in P_t$ with $0< t_1<\ldots< t_{m-1}<t$ and $m\in \N$. Note that the case $\pi=\{0\}$ is trivial. For $m=1$, we have $\pi=\{0,t\}$, and by \eqref{rd01},
 \[ \|J_\pi f - f\|_\infty = \| J_t f - J_0 f\|_\infty \le L_f\, t.\]
 Now let $m\geq 2$, and set $\pi' := \{0,t_1,\dots,t_{m-1}\}$. By induction, we have $\|J_{\pi'} f -f \|_\infty\le L_f\, t_{m-1}$. As $J_{\pi'}$ is a sublinear Markovian convolution and therefore $1$-Lipschitz continuous, we obtain
 \begin{align*}
   \|J_\pi f -f\|_\infty & = \| J_{\pi'} J_{t-t_{m-1}} f - f\|_\infty \\
   & \le \| J_{\pi'} J_{t-t_{m-1}} f - J_{\pi'} f\|_\infty + \| J_{\pi'} f - f\|_\infty \\
   & \le  \|  J_{t-t_{m-1}} f -   f\|_\infty + \| J_{\pi'} f - f\|_\infty \\
   & \le L_f (t-t_{m-1}) + L_f\, t_{m-1} = L_f\, t.
 \end{align*}
  \end{enumerate}

 \vspace*{-1em}
\end{proof}

The following result shows that $J_\pi f$ depends continuously on the partition $\pi$.

\begin{lemma}\label{strongcontJ}
Let $m\in\N$, $\pi=\{0,t_1,\ldots,t_m\}\in P$ with $0<t_1<\ldots<t_m$, and for $n\in\N$ let $\pi_n =\{0,t_1^n,\dots, t_m^n\}\in P$ with $\lim_{n\to\infty} t_j^n = t_j$ for $j=1,\dots,m$. Then
\[
 \|J_{\pi}f-J_{\pi_n}f\|_\infty\to 0,\quad \mbox{as }n\to \infty,
\]
for all $f\in BUC(G)$.
\end{lemma}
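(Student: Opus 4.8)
The plan is to prove the assertion by induction on the number $m$ of non-zero points of the partitions, reducing the convergence of the multi-step compositions to the continuity of the single-step family $t\mapsto J_t$ on $\BUC(G)$. Two facts carry the argument. First, by Lemma \ref{lip12}a) each $J_t$ is a sublinear Markovian convolution, hence $1$-Lipschitz with respect to $\|\cdot\|_\infty$; consequently any finite composition $J_{s_1}\cdots J_{s_k}$ is again $1$-Lipschitz, with Lipschitz constant $1$ independently of the increments. Second, the estimate \eqref{rd01} controls $J_t$ in the time variable on the dense subspace $D$. Throughout I may assume, for $n$ large enough, that $0<t_1^n<\dots<t_m^n$, which is legitimate since the limit points $t_1<\dots<t_m$ are strictly increasing and positive.

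As a preliminary step I would show that for every fixed $g\in\BUC(G)$ the map $[0,\infty)\to\BUC(G)$, $t\mapsto J_t g$, is continuous. For $g\in D$ this is immediate from \eqref{rd01}, which makes it Lipschitz with constant $L_g$. For general $g$ I would choose, by density of $D$, some $g_\ep\in D$ with $\|g-g_\ep\|_\infty<\ep$ and combine the triangle inequality with the contraction property of $J_{t_1}$ and $J_{t_2}$:
\[
\|J_{t_1}g-J_{t_2}g\|_\infty\le 2\|g-g_\ep\|_\infty+\|J_{t_1}g_\ep-J_{t_2}g_\ep\|_\infty\le 2\ep+L_{g_\ep}|t_1-t_2|,
\]
so that $\limsup_{t_1\to t_2}\|J_{t_1}g-J_{t_2}g\|_\infty\le 2\ep$ for every $\ep>0$, which gives continuity.

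For the induction, the base case $m=1$ is precisely this continuity applied at $t_1$, since $J_{\pi_n}f=J_{t_1^n}f$ and $J_\pi f=J_{t_1}f$ with $t_1^n\to t_1$. For the inductive step I would peel off the last factor: writing $s_j^n:=t_j^n-t_{j-1}^n$ (and $t_0^n:=0$), set $\Phi_n:=J_{s_1^n}\cdots J_{s_{m-1}^n}=J_{\sigma_n}$ with $\sigma_n:=\{0,t_1^n,\dots,t_{m-1}^n\}$, and $h_n:=J_{s_m^n}f$. Since $\sigma_n$ has one fewer increment and its points converge to those of $\sigma:=\{0,t_1,\dots,t_{m-1}\}$, the inductive hypothesis yields $\Phi_n v\to \Phi v$ for every fixed $v\in\BUC(G)$, where $\Phi:=J_\sigma$; moreover $s_m^n\to s_m:=t_m-t_{m-1}$ gives $h_n\to h:=J_{s_m}f$ by the preliminary step. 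Writing $J_{\pi_n}f=\Phi_n h_n$ and $J_\pi f=\Phi h$, I would then estimate
\[
\|\Phi_n h_n-\Phi h\|_\infty\le \|\Phi_n h_n-\Phi_n h\|_\infty+\|\Phi_n h-\Phi h\|_\infty\le \|h_n-h\|_\infty+\|\Phi_n h-\Phi h\|_\infty,
\]
where the first term is handled by the $1$-Lipschitz property of $\Phi_n$ and the second by the inductive hypothesis applied to the fixed function $h$; both tend to $0$.

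The crux is that both the outer operator $\Phi_n$ and the inner function $h_n$ depend on $n$, so one cannot directly invoke strong convergence of $\Phi_n$ (which holds only against a fixed argument) nor continuity of a single $J_t$ (which holds only for a fixed function). I expect this to be the only real obstacle, and the device that resolves it is that the Lipschitz constant of $\Phi_n$ equals $1$ uniformly in $n$: this lets me swap $h_n$ for its limit $h$ at the harmless cost $\|h_n-h\|_\infty$, after which the inductive hypothesis—strong convergence of $\Phi_n$ on the now fixed function $h$—closes the argument.
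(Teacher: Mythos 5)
Your proof is correct and follows essentially the same route as the paper: the paper also reduces the claim to the continuity of the single-step map $t\mapsto J_tf$ and proves that continuity by exactly your density argument (approximate $f$ by $f_0\in D$, use the $1$-Lipschitz property of $J_s$ and the estimate \eqref{rd01}). The only difference is that the paper dismisses the reduction with ``by definition of $J_\pi$,'' whereas you spell out the induction and the uniform $1$-Lipschitz bound on the compositions that makes the swap of $h_n$ for $h$ legitimate — a detail worth having explicit, but not a different argument.
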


\begin{proof}
Note that $0<t_1^n<\ldots<t_m^n$ for sufficiently large $n$. Fix $f\in \BUC(G)$.
We have to show the continuity of the map $(t_1,\dots,t_m)\mapsto J_{\{0,t_1,\dots,t_m\}}f$. By definition of $J_\pi$, it is  sufficient to show that the map
\[ [0,\infty)\to \BUC(G),\quad  t\mapsto J_tf\]
is continuous. Let $\ep>0$ and $t\ge 0$, and let $(t^n)$ be a sequence in
$[0,\infty)$ such that $t^n\to t$. By assumption (A2) there exists $f_0\in D$ with $\|f-f_0\|_\infty\leq \frac{\ep}{3}$. Since $J_{s}=J_{0,s}$ is a sublinear Markovian convolution by Lemma 5.1 a), it is $1$-Lipschitz and it follows that
\[
 \|J_s f -J_s f_0\|_\infty\leq \|f-f_0\|_\infty<\frac{\ep}{3}
\]
for all $s\geq 0$. Hence, it follows from  Lemma~\ref{lip12} that
  \begin{align*}
   \|J_t f -J_{t^n}f\|_\infty&\leq \|J_t f -J_t f_0\|_\infty+ \| J_t f_0 - J_{t^n} f_0\|_\infty + \|J_{t^n} f -J_{t^n}f_0\|_\infty
   \\&\leq \tfrac 23 \ep +L_{f_0} |t-t^n|<\ep
  \end{align*}
 for sufficiently large $n\in\N$.
\end{proof}

The above results allow to consider the limit of $J_\pi f$ when the mesh size of the partition tends to zero. To that end, we first note that for $s,t\geq 0$, $f\in \BUC(G)$ and $x\in G$, it holds
\begin{align*}
 \big(J_{t+s}f\big)(x)&=\sup_{\la\in \La} \big(S_\la(t+s)f\big)(x)=\sup_{\la\in \La} \big(S_\la(t)S_\la(s)f\big)(x)\\
 &\leq \sup_{\la\in \La} \big(S_\la(t)J_{s}f\big)(x)=\big(J_{t}J_{s}f\big)(x).
\end{align*}
From this, we obtain for $\pi_1,\pi_2\in P_t$ with $\pi_1\subset \pi_2$  the pointwise inequality
\begin{equation}\label{monotone}
 J_{\pi_1}f\leq J_{\pi_2}f.
\end{equation}
In particular, for $\pi_1,\pi_2\in P_t$ we have that
\begin{equation}\label{0}
 \big(J_{\pi_1}f\big)\vee \big(J_{\pi_2}f\big)\leq J_\pi f,
\end{equation}
where $\pi:=\pi_1\cup \pi_2\in P_t$. Therefore, we define
\begin{equation}\label{def:S}
 \big(\SS(t)f\big)(x):=\sup_{\pi\in P_t} \big(J_\pi f\big)(x)
\end{equation}
for all $t\geq 0$, $x\in G$ and $f\in \BUC(G)$. Note that $\SS(0)f=f$ for all $f\in \BUC(G)$.

\begin{lemma}\label{lip1}
$\SS(t)$ is a sublinear Markovian convolution for all $t\ge 0$, which satisfies
\[\|\SS(t)f-f\|_\infty\leq L_f\, t,\quad  f\in D.\]
Moreover,
\[    \lim_{t\searrow 0}\|\SS(t)f-f\|_\infty = 0\]
for all $f\in BUC(G)$.
\end{lemma}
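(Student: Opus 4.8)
The plan is to transfer each defining property of a sublinear Markovian convolution from the operators $J_\pi$ to their pointwise supremum $\SS(t)$. By \eqref{def:S} we have $(\SS(t)f)(x)=\sup_{\pi\in P_t}(J_\pi f)(x)$, and by \eqref{0} this supremum runs over an upward-directed family. Monotonicity, constant-preservation, positive homogeneity and subadditivity of the functional $f\mapsto(\SS(t)f)(x)$ are then inherited from the corresponding properties of the sublinear Markovian convolutions $J_\pi$ (Lemma~\ref{lip12}~a)): for instance $J_\pi(f+g)\le J_\pi f+J_\pi g\le\SS(t)f+\SS(t)g$ for every $\pi$, and taking the supremum gives subadditivity. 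Translation invariance is equally direct, since $(\SS(t)f)(x)=\sup_\pi(J_\pi f_x)(0)=(\SS(t)f_x)(0)$, and boundedness follows from $\|J_\pi f\|_\infty\le\|f\|_\infty$.

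The step I expect to require the most care --- and the main obstacle --- is showing that $\SS(t)f$ is again \emph{uniformly} continuous, i.e.\ that taking the supremum over $P_t$ does not leave $\BUC(G)$. I would handle this by producing a modulus of continuity common to the whole family $\{J_\pi f:\pi\in P_t\}$. Each $J_\pi$ is a translation-invariant, $1$-Lipschitz sublinear Markovian convolution, so with $\omega_f$ a modulus of continuity of $f$ and using translation invariance of $d$,
\[
 |(J_\pi f)(x)-(J_\pi f)(z)|=\big|(J_\pi f_x)(0)-(J_\pi f_z)(0)\big|\le\|f_x-f_z\|_\infty\le\omega_f\big(d(x,z)\big),
\]
a bound that does not depend on $\pi$. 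Passing to the supremum over $\pi$ in each slot yields $|(\SS(t)f)(x)-(\SS(t)f)(z)|\le\omega_f(d(x,z))$, so $\SS(t)f\in\BUC(G)$. Continuity from below is then the only remaining axiom: for $f_n\nearrow f$ each $J_\pi$ is continuous from below, whence $(J_\pi f_n)(x)\nearrow(J_\pi f)(x)$, and interchanging the two suprema $\sup_n\sup_\pi=\sup_\pi\sup_n$ gives $\SS(t)f_n\nearrow\SS(t)f$. This completes the proof that $\SS(t)$ is a sublinear Markovian convolution.

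For the quantitative estimate I would simply take the supremum in \eqref{rd02}: for $f\in D$ and every $\pi\in P_t$ one has $f-L_f\,t\le J_\pi f\le f+L_f\,t$ pointwise, and since $\SS(t)f\ge J_{\{0,t\}}f\ge f-L_f\,t$ controls the lower bound while the upper bound survives the supremum, we obtain $\|\SS(t)f-f\|_\infty\le L_f\,t$. Finally, strong continuity at $0$ for $f\in D$ is immediate from this estimate, since $L_f\,t\to0$. For arbitrary $f\in\BUC(G)$ I would invoke density of $D$ from (A2): choosing $g\in D$ with $\|f-g\|_\infty<\ep/3$ and using that $\SS(t)$ is $1$-Lipschitz --- which follows from $(\SS(t)f)(x)-(\SS(t)g)(x)\le\sup_\pi\big[(J_\pi f)(x)-(J_\pi g)(x)\big]\le\|f-g\|_\infty$ --- the three-term estimate $\|\SS(t)f-f\|_\infty\le 2\|f-g\|_\infty+L_g\,t$ lies below $\ep$ for $t$ sufficiently small.
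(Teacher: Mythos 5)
Your proof is correct and follows essentially the same route as the paper: the paper asserts in one sentence that the pointwise supremum of the sublinear Markovian convolutions $J_\pi$ is again one, obtains $\|\SS(t)f-f\|_\infty\leq\sup_{\pi\in P_t}\|J_\pi f-f\|_\infty\leq L_f\,t$ from Lemma~\ref{lip12}, and concludes for general $f\in\BUC(G)$ by the same density argument you give. The only difference is that you spell out the details the paper leaves implicit, in particular the $\pi$-independent modulus of continuity showing that the supremum stays in $\BUC(G)$, which is exactly the point that needs checking.
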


\begin{proof}
As $J_\pi$ is a sublinear Markovian convolution for all $\pi\in P_t$, the same holds for $\SS(t)$. For $f\in D$, $x\in G$ and  $\varepsilon>0$ there exists $\pi_0\in P_t$ such that
\[\big(\SS(t)f\big)(x)-f(x)\le \big(J_{\pi_0} f\big)(x)-f(x)+\varepsilon\le\sup_{\pi\in P_t}\|J_\pi f-f\|_\infty+\varepsilon\]
and
\[f(x)-\big(\SS(t)f\big)(x)\le f(x)-\big(J_{\pi_0} f\big)(x)\le\sup_{\pi\in P_t}\|J_\pi f-f\|_\infty. \]
By Lemma \ref{lip12} it follows that
\[
  \|\SS(t)f-f\|_\infty\leq \sup_{\pi\in P_t}\|J_\pi f-f\|_\infty \leq L_f\, t.
 \]
From this, we obtain that $\lim_{t\searrow 0}\|\SS(t)f-f\|_\infty= 0$ for $f\in \BUC(G)$
with the same density argument as in the proof of Lemma~\ref{strongcontJ}.
\end{proof}

\begin{lemma}\label{monlimit}
 For $t\ge0$, let $(\pi_n)_{n\in\N}$ be a sequence in $P_t$ such that  $\pi_n\subset \pi_{n+1}$ for all $n\in \N$, and $\lim_{n\to\infty} |\pi_n|_\infty= 0$. Then
 \[
  J_{\pi_n}f\nearrow \SS(t)f, \quad \mbox{as }n\to \infty,
 \]
 for all $f\in \BUC(G)$.
\end{lemma}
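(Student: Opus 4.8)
The plan is to dispose of the trivial inequality first and then reduce everything to a single quantitative estimate measuring how much refining a partition by \emph{finitely many} points can change $J_\pi f$.

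First I would record the monotone structure. By \eqref{monotone}, $\pi_n\subset\pi_{n+1}$ gives $J_{\pi_n}f\le J_{\pi_{n+1}}f$ pointwise, and $J_{\pi_n}f\le\SS(t)f$ by \eqref{def:S}. Hence the pointwise increasing limit $u:=\sup_{n}J_{\pi_n}f$ exists and satisfies $u\le\SS(t)f$. It then remains to prove $\SS(t)f\le u$, i.e.\ $J_\pi f\le u$ pointwise for every fixed $\pi\in P_t$. Writing $\pi=\{0=s_0<s_1<\dots<s_k=t\}$, I set $\tilde\pi_n:=\pi\cup\pi_n\in P_t$. Since $\pi\subset\tilde\pi_n$, \eqref{monotone} yields $J_\pi f\le J_{\tilde\pi_n}f$, so it suffices to establish $\|J_{\tilde\pi_n}f-J_{\pi_n}f\|_\infty\to0$; indeed then $J_\pi f\le J_{\pi_n}f+\|J_{\tilde\pi_n}f-J_{\pi_n}f\|_\infty\to u$ pointwise. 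Note $\tilde\pi_n$ differs from $\pi_n$ by at most the $k-1$ interior points of $\pi$.

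Next I would telescope over the inserted points. Insert them one at a time, $\pi_n=\rho^{(0)}\subset\rho^{(1)}\subset\dots\subset\rho^{(l)}=\tilde\pi_n$ with $l\le k-1$. At step $j$ a point $p$ is inserted into an interval $[u_c,u_{c+1}]$ of $\rho^{(j-1)}$; with $a=p-u_c$, $b=u_{c+1}-p$, factoring out the common outer block (a composition of Markovian convolutions, hence $1$-Lipschitz) and denoting by $g:=J_{\rho'}f$ the function produced by the inner block ($\rho'\in P$), the subadditivity $J_{a+b}\le J_aJ_b$ together with the $1$-Lipschitz property and the linear semigroup law $S_\la(a+b)=S_\la(a)S_\la(b)$ give
\[
\|J_{\rho^{(j)}}f-J_{\rho^{(j-1)}}f\|_\infty\le\|J_aJ_bg-J_{a+b}g\|_\infty\le\|J_bg-g\|_\infty+\|J_ag-J_{a+b}g\|_\infty\le2\sup_{\la\in\La}\|S_\la(b)g-g\|_\infty.
\]
Because $[u_c,u_{c+1}]$ lies inside an interval of $\pi_n$, one has $a,b\le|\pi_n|_\infty=:\de_n$, and every intermediate $g$ is $J_{\rho'}f$ for a Markovian convolution $J_{\rho'}$.

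The crux is the uniform small-time bound $\eta(\de):=\sup\{\|S_\la(s)g-g\|_\infty:\la\in\La,\ 0\le s\le\de,\ g=J_{\rho'}f,\ \rho'\in P\}\to0$ as $\de\searrow0$, which rests on two ingredients. (i) Since each $J_{\rho'}$ is a Markovian convolution it does not enlarge the modulus of continuity, so every intermediate $g$ shares the uniform modulus $\om_f$ of $f$; using that $S_\la(s)$ is linear, monotone and preserves constants, this yields $\|S_\la(s)g-g\|_\infty\le(S_\la(s)w)(0)$ for all such $g$ and all $\la$, where $w:=\om_f(d(\cdot,0))\in\BUC(G)$ and $w(0)=0$. (ii) For \emph{any} $w\in\BUC(G)$ one has $\lim_{s\searrow0}\sup_{\la\in\La}\|S_\la(s)w-w\|_\infty=0$: given $\ep>0$, choose $\psi\in D$ with $\|w-\psi\|_\infty<\ep$ by (A2), and use $\|S_\la(s)\psi-\psi\|_\infty\le s\|A_\la\psi\|_\infty\le sL_\psi$ together with the contraction property to obtain $\sup_\la\|S_\la(s)w-w\|_\infty\le2\ep+sL_\psi$. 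Combining, $\eta(\de)\le\sup_{\la,\,s\le\de}(S_\la(s)w)(0)\le\sup_{\la,\,s\le\de}\|S_\la(s)w-w\|_\infty\to0$. I expect (ii) to be the genuinely delicate step: it is precisely where (A2) is used, to produce control of the small-time increments that is uniform across the (possibly uncountable) index set $\La$, and this control must then be transported to the non-$D$ intermediate functions $g$ through the modulus estimate in (i). Summing the at most $k-1$ single-insertion bounds gives $\|J_{\tilde\pi_n}f-J_{\pi_n}f\|_\infty\le2(k-1)\,\eta(\de_n)\to0$, whence $J_\pi f\le u$ for every $\pi\in P_t$ and therefore $\SS(t)f=\sup_{\pi\in P_t}J_\pi f\le u\le\SS(t)f$, so that $J_{\pi_n}f\nearrow\SS(t)f$. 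Since the estimate holds for arbitrary $f\in\BUC(G)$, no separate density reduction in $f$ is required.
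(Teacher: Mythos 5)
Your proof is correct in substance but follows a genuinely different route from the paper's. The paper handles the nontrivial inequality $\SS(t)f\le\sup_n J_{\pi_n}f$ by going \emph{downward}: for a fixed $\pi\in P_t$ with $m+1$ points it extracts a sub-partition $\pi_n'\subset\pi_n$ with the same number of points converging to $\pi$, uses $J_{\pi_n}f\ge J_{\pi_n'}f$ from \eqref{monotone}, and invokes the continuity of $\pi\mapsto J_\pi f$ in the partition points (Lemma \ref{strongcontJ}). You instead go \emph{upward}, refining $\pi_n$ to $\pi\cup\pi_n$ and proving a quantitative single-point-insertion estimate $\|J_aJ_bg-J_{a+b}g\|_\infty\le 2\sup_\la\|S_\la(b)g-g\|_\infty$, which you then sum over the at most $\#\pi-2$ inserted points; the uniform small-time control over the family $\{J_{\rho'}f:\rho'\in P\}$ (uniformly bounded by $\|f\|_\infty$ and sharing the modulus of $f$, since each $J_{\rho'}$ is a $1$-Lipschitz Markovian convolution) is exactly the content of the paper's Lemma \ref{appen3}, which you re-derive in a self-contained way from (A2) without passing through $\SS$. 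Both arguments use (A2) at bottom (the paper via Lemma \ref{lip12} inside Lemma \ref{strongcontJ}); yours is more quantitative and avoids Lemma \ref{strongcontJ} entirely, at the cost of the telescoping bookkeeping. One technical point you should repair: the modulus of continuity $\om_f$ need not be continuous, so $w=\om_f(d(\cdot,0))$ need not lie in $\BUC(G)$, which your step (ii) requires. Either replace $\om_f$ by a continuous majorant (e.g.\ $r\mapsto\tfrac1r\int_r^{2r}\om_f(s)\,{\rm d}s$, which still vanishes at $0$ and yields $w\in\BUC(G)$), or argue as in the paper's proof of Lemma \ref{appen3} with the bound $g(x+\cdot)-g(x)\le\ep+2\|f\|_\infty\varphi$ for $\varphi(y)=\tfrac1\de(d(y,0)\wedge\de)$. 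With that fix the argument is complete.
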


\begin{proof}
Fix  $f\in \BUC(G)$. For $t=0$, the statement is trivial. For   $t>0$ and $x\in G$ we define
  \[
  \big(J_\infty f\big)(x):=\sup_{n\in \N} \big(J_{\pi_n}f\big)(x)
 \]
Then, $J_\infty$ is a sublinear Markovian convolution. Since $\pi_n\subset \pi_{n+1}$, it follows from \eqref{monotone}
that
 \[
  J_{\pi_n}f\nearrow J_\infty f, \quad \mbox{as }n\to \infty.
 \]
By definition of $\SS(t)$, it clearly holds $J_\infty f \leq \SS(t)f$.
As for the other inequality, let $\pi=\{t_0,t_1,\ldots, t_m\}\in P_t$ with $m\in \N$ and $0=t_0<t_1<\ldots <t_m=t$. Since $|\pi_n|_\infty\searrow 0$ as $n\to \infty$, we may w.l.o.g. assume that $\#\pi_n\geq m+1$ for all $n\in \N$. Moreover, we can choose  $0=t_0^n<t_1^n<\ldots <t_m^n=t$ with $\pi_n':=\{t_0^n,t_1^n,\ldots, t_m^n\}\subset \pi_n$ and $\lim_{n\to\infty} t_i^n = t_i$ for all $i=1,\dots,m-1$. Then, by Lemma \ref{strongcontJ}, we have that
 \[
  \|J_\pi f-J_{\pi_n'}f\|_\infty\to 0, \quad \mbox{as }n\to \infty.
 \]
 Since
 \[
  J_\infty f\geq J_{\pi_n}f\geq J_{\pi_n'}f\geq J_\pi f-\|J_\pi f-J_{\pi_n'}f\|_\infty
 \]
 we obtain that $J_\infty f\geq J_\pi f$.
 Taking the supremum over all $\pi\in P_t$, we thus get that $J_\infty f=\SS(t)f$.
 %Now, let $f\in C_b(G)$ with $f\geq 0$. Then, by the assumptions on $G$, there exists a sequence $(\varphi_n)_{n\in \N}\subset C_0(G)$ with $\varphi_n\nearrow f$ as $n\to %\infty$. As $J_\infty$ and $\SS(t)$ are both continuous from below, we get that
 %\[
 % J_\infty f=\lim_{n\to \infty} J_\infty \varphi_n=\lim_{n\to \infty} \SS(t)\varphi_n=\SS(t)f.
 %\]
 %Now, let $f\in C_b(G)$. Then, as $J_\infty$ and $\SS(t)$ are both sublinear and Markovian, we get that
 %\[
 % J_\infty f=J_\infty\big(f+\|f\|_\infty\big)-\|f\|_\infty=\SS(t)\big(f+\|f\|_\infty\big)-\|f\|_\infty=\SS(t)f
 %\]
 %and therefore, $J_{\pi_n}f\nearrow \SS(t)f$ as $n\to \infty$.
\end{proof}

\begin{corollary}\label{seq}
 Let $t\ge 0$. Then, there exists a sequence $(\pi_n)$ in $P_t$ such that
 \[
  J_{\pi_n}f\nearrow \SS(t)f,\quad\mbox{as } n\to\infty.
 \]
Moreover,
\begin{equation}\label{rd03}
  \SS(t)f=\sup_{n\in \N} \big(J_{\frac{t}{n}}\big)^n f=\sup_{n\in\N}\big(J_{2^{-n}t}\big)^{2^n}f=\lim_{n\to \infty}\big(J_{2^{-n}t}\big)^{2^n}f,
 \end{equation}
for all $f\in \BUC(G)$,  where the supremum is understood pointwise.
\end{corollary}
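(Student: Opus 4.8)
The plan is to deduce both parts of the statement from the monotone-limit Lemma~\ref{monlimit} together with the definition \eqref{def:S} of $\SS(t)$ as a pointwise supremum over $P_t$, by making one explicit, $f$-independent choice of partitions. The case $t=0$ is trivial, since $P_0=\{\{0\}\}$ and $J_{\{0\}}f=f=\SS(0)f$ while every expression in \eqref{rd03} collapses to $f$ (note $J_0$ is the identity); so I would assume $t>0$ throughout.

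For the first assertion I would take the dyadic partitions
\[
 \pi_n := \big\{ k 2^{-n} t : k=0,1,\dots,2^n\big\}\in P_t,\quad n\in\N.
\]
Each element $k2^{-n}t = (2k)2^{-(n+1)}t$ lies in $\pi_{n+1}$, so $\pi_n\subset\pi_{n+1}$, and $|\pi_n|_\infty = 2^{-n}t\to 0$. Since these partitions are fixed independently of $f$, Lemma~\ref{monlimit} applies and yields $J_{\pi_n}f\nearrow \SS(t)f$ as $n\to\infty$ for every $f\in\BUC(G)$, which is precisely the claimed existence of a sequence.

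Next I would record the dyadic identity $J_{\pi_n}f=\big(J_{2^{-n}t}\big)^{2^n}f$, valid because all $2^n$ consecutive gaps of $\pi_n$ equal $2^{-n}t$. As the convergence $J_{\pi_n}f\nearrow\SS(t)f$ is monotone, the pointwise supremum and the pointwise limit agree, which gives the last two equalities in \eqref{rd03},
\[
 \SS(t)f=\sup_{n\in\N}\big(J_{2^{-n}t}\big)^{2^n}f=\lim_{n\to\infty}\big(J_{2^{-n}t}\big)^{2^n}f .
\]

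The remaining equality $\SS(t)f=\sup_{n\in\N}\big(J_{t/n}\big)^n f$ concerns the uniform partitions $\rho_n:=\{k t/n : k=0,\dots,n\}\in P_t$, for which $J_{\rho_n}f=\big(J_{t/n}\big)^n f$. The only genuine obstacle is that the $\rho_n$ are \emph{not} nested, so Lemma~\ref{monlimit} cannot be invoked for them directly; I would instead argue by a sandwich. On one side, $\rho_n\in P_t$ and \eqref{def:S} give $\big(J_{t/n}\big)^n f=J_{\rho_n}f\le \SS(t)f$, hence $\sup_n\big(J_{t/n}\big)^n f\le \SS(t)f$. On the other side, the dyadic values $\big(J_{2^{-n}t}\big)^{2^n}f$ appear among the uniform values (take $m=2^n$), so combining with the previous paragraph
\[
 \SS(t)f=\sup_{n\in\N}\big(J_{2^{-n}t}\big)^{2^n}f\le \sup_{m\in\N}\big(J_{t/m}\big)^m f\le \SS(t)f ,
\]
forcing equality throughout and completing all three identities in \eqref{rd03}.
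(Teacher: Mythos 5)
Your proof is correct and follows essentially the same route as the paper: both use the nested dyadic partitions $\{k2^{-n}t\}$ in Lemma~\ref{monlimit} to get the monotone convergence and the last two equalities in \eqref{rd03}, and both obtain the uniform-partition identity by the same sandwich (each $(J_{t/n})^n f$ is bounded by $\SS(t)f$ via \eqref{def:S}, while the dyadic values form a subfamily of the uniform ones). The only cosmetic difference is your explicit treatment of $t=0$, which the paper leaves implicit.
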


\begin{proof}
Choose $\pi_n :=\big\{\frac{kt}{2^n}\, :\, k\in \{0,\ldots ,2^n\}\big\}$ in Lemma \ref{monlimit} to obtain the first statement. In particular,
\begin{equation}\label{revised1}
  \SS(t)f=\SS(t)f=\sup_{n\in\N} J_{\pi_n} f=\sup_{n\in\N}\big(J_{2^{-n}t}\big)^{2^n}f=\lim_{n\to \infty}\big(J_{2^{-n}t}\big)^{2^n}f.
\end{equation}
For $\tilde \pi_n:=\big\{\frac{kt}{n}\, :\, k\in \{0,\ldots, n\}\big\}$ it holds
$\tilde\pi_{2^n}= \pi_{n}$ for all $n\in\N$. Therefore, by \eqref{revised1}, it follows that
\[
 \SS(t)f=\sup_{n\in\N} J_{\pi_n} f=\sup_{n\in\N} J_{\tilde\pi_{2^n}} f\leq \sup_{n\in\N}J_{\tilde\pi_{n}}f\leq \SS(t)f.
\]
Hence,
\[
 \SS(t)f=\sup_{n\in\N}J_{\tilde\pi_{n}}f=\sup_{n\in \N} \big(J_{\frac{t}{n}}\big)^n f,
\]
which yields the first equality in \eqref{rd03} and therefore the assertion.
\end{proof}

\begin{proposition}[Dynamic programming principle]\label{dpp}
$(\SS(t))_{t\geq 0}$ is a Markovian convolution semigroup of sublinear operators. In particular, for every
$s,t\geq 0$ one has
\begin{equation}\label{rd04}
 \SS(s+t)f=\SS(s)\SS(t)f
\end{equation}
for every $f\in BUC(G)$.
\end{proposition}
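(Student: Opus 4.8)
The properties that make $(\SS(t))_{t\ge0}$ a Markovian convolution semigroup of sublinear operators have, for the most part, already been established: each $\SS(t)$ is a sublinear Markovian convolution and $\lim_{t\searrow0}\|\SS(t)f-f\|_\infty=0$ by Lemma~\ref{lip1}, while $\SS(0)f=f$ was noted right after \eqref{def:S}. Hence the only remaining point, and the actual content of \eqref{rd04}, is the semigroup identity $\SS(s+t)f=\SS(s)\SS(t)f$. The plan is to prove this as two opposite inequalities, both resting on one bookkeeping observation about partitions: if $\pi_1=\{0=s_0<\dots<s_k=s\}\in P_s$ and $\pi_2=\{0=r_0<\dots<r_l=t\}\in P_t$, then the concatenation $\pi_1\oplus\pi_2:=\pi_1\cup(s+\pi_2)\in P_{s+t}$, where $s+\pi_2:=\{s+r:r\in\pi_2\}$, has increments equal to those of $\pi_1$ followed by those of $\pi_2$, so that
\[ J_{\pi_1\oplus\pi_2}f = J_{\pi_1}J_{\pi_2}f \qquad\text{for all } f\in\BUC(G). \]

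For the inequality $\SS(s)\SS(t)f\le\SS(s+t)f$ I would fix $\pi_1\in P_s$ and use Corollary~\ref{seq} to pick a sequence $(\pi_2^n)$ in $P_t$ with $J_{\pi_2^n}f\nearrow\SS(t)f$. Since each $J_{\pi_1}$ is continuous from below (being a composition of operators $J_r$, which inherit continuity from below from the $S_\la(r)$, as already noted), I can pass the supremum through $J_{\pi_1}$:
\[ J_{\pi_1}\big(\SS(t)f\big)=\sup_{n\in\N}J_{\pi_1}\big(J_{\pi_2^n}f\big)=\sup_{n\in\N}J_{\pi_1\oplus\pi_2^n}f\le\SS(s+t)f, \]
the last step because $\pi_1\oplus\pi_2^n\in P_{s+t}$ and $\SS(s+t)f=\sup_{\pi\in P_{s+t}}J_\pi f$ by \eqref{def:S}. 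Taking the supremum over $\pi_1\in P_s$ and recalling that $\SS(s)g=\sup_{\pi_1\in P_s}J_{\pi_1}g$ for the function $g:=\SS(t)f\in\BUC(G)$ then yields $\SS(s)\SS(t)f\le\SS(s+t)f$.

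For the reverse inequality $\SS(s+t)f\le\SS(s)\SS(t)f$ I would fix an arbitrary $\pi\in P_{s+t}$ and refine it by inserting the point $s$: set $\pi':=\pi\cup\{s\}\supset\pi$, so that $J_\pi f\le J_{\pi'}f$ by \eqref{monotone}. Now $\pi'$ contains $s$ and therefore splits as $\pi'=\pi_1\oplus\pi_2$ with $\pi_1:=\pi'\cap[0,s]\in P_s$ and $\pi_2:=(\pi'\cap[s,s+t])-s\in P_t$. Using $J_{\pi'}f=J_{\pi_1}J_{\pi_2}f$, the bound $J_{\pi_2}f\le\SS(t)f$, the monotonicity of $J_{\pi_1}$, and finally $J_{\pi_1}g\le\SS(s)g$ with $g:=\SS(t)f$, I obtain
\[ J_\pi f\le J_{\pi'}f=J_{\pi_1}J_{\pi_2}f\le J_{\pi_1}\big(\SS(t)f\big)\le\SS(s)\SS(t)f. \]
Taking the supremum over $\pi\in P_{s+t}$ gives $\SS(s+t)f\le\SS(s)\SS(t)f$, and combining the two inequalities proves \eqref{rd04}; the remaining semigroup axioms then follow from the earlier lemmas.

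The routine parts — the concatenation identity and the fact that monotonicity and $1$-Lipschitz continuity of the $J_\pi$ are at our disposal — are immediate from the construction in Lemma~\ref{lip12}. The one genuinely delicate step is interchanging the supremum over the refining partitions of $[0,t]$ with the operator $J_{\pi_1}$ in the first inequality; this is precisely where \emph{continuity from below} of $J_{\pi_1}$ (hence of each $J_r$, hence of each $S_\la(r)$) is indispensable, and it is the reason for invoking the monotone approximating sequence of Corollary~\ref{seq} rather than working with an arbitrary family of partitions.
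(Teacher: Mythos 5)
Your proposal is correct and follows essentially the same route as the paper: the inequality $\SS(s+t)f\le\SS(s)\SS(t)f$ via inserting the point $s$ into an arbitrary partition of $[0,s+t]$ and splitting, and the reverse inequality via the monotone approximating sequence from Corollary~\ref{seq} together with continuity from below of $J_{\pi_1}$. The only cosmetic difference is that you isolate the concatenation identity $J_{\pi_1\oplus\pi_2}=J_{\pi_1}J_{\pi_2}$ as an explicit preliminary step, which the paper uses implicitly.
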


\begin{proof}
Let $\pi_0\in P_{s+t}$ and $\pi:=\pi_0\cup \{s\}$. Then, $\pi\in P_{s+t}$ with $\pi_0\subset \pi$, and by \eqref{monotone} we get that
 \[
  J_{\pi_0} f\leq J_{\pi}f.
 \]

We have already shown all properties of a Markovian convolution semigroup except the semigroup property  \eqref{rd04}, i.e.~the dynamic programming principle.

 If $s=0$ or $t=0$ the statement is trivial. Therefore, let $s,t>0$.
 Let $m\in \N$ and $0=t_0<t_1<\ldots <t_m=s+t$ with $t_i=s$ for some $i\in \{1,\ldots, m\}$, and define
 $\pi:=\{t_0,\ldots, t_m\}\in P_{s+t}$. Then, for $\pi_1:=\{t_0,\ldots, t_i\}\in P_s$ and $\pi_2:=\{t_i-s,\ldots, t_m-s\}\in P_t$ we have
 \[
  J_{\pi_1}=J_{t_1-t_0}\cdots J_{t_i-t_{i-1}},\quad\mbox{and}\quad J_{\pi_2}=J_{t_{i+1}-t_i}\cdots J_{t_m-t_{m-1}},
 \]
 and therefore
 \begin{align*}
  J_\pi f&=J_{t_1-t_0}\cdots J_{t_m-t_{m-1}}f=\big(J_{t_1-t_0}\cdots J_{t_i-t_{i-1}}\big)\big(J_{t_{i+1}-t_i}\cdots J_{t_m-t_{m-1}}f\big)\\
  &=J_{\pi_1}J_{\pi_2}f\leq J_{\pi_1}\SS(t)f\leq \SS(s)\SS(t)f.
 \end{align*}
 Taking the supremum over all $\pi\in P_{s+t}$, we get $\SS(s+t)f\leq \SS(s)\SS(t)f$.
 Conversely, by Corollary \ref{seq} there exists  a sequence $(\pi_n)$ in $P_t$ such that $J_{\pi_n}f\nearrow \SS(t)f$ as $n\to \infty$. For $\pi_0\in P_s$ and $\pi_n':=\pi_0\cup \{r+s : r \in \pi_n\}\in P_{s+t}$ it holds
 $J_{\pi_n'}=J_{\pi_0}J_{\pi_n}$. Since $J_{\pi_0}$ is continuous from below we have
 \[
  J_{\pi_0}\SS(t)f=\lim_{n\to \infty}J_{\pi_0}J_{\pi_n}f=\lim_{n\to \infty}J_{\pi_n'}f\leq \SS(s+t)f.
 \]
 Taking the supremum over all $\pi_0\in P_s$, we get that $\SS(s)\SS(t)f\leq \SS(s+t)f$.
\end{proof}

% Anpassen
The following lemma is a special case of Jensen's inequality for vector valued functions. For the reader's convenience we provide a short proof and refer to \cite[Section 1.2.]{hytonen16} for an introduction to Bochner integration.
\begin{lemma}\label{intsemi}
 Let $\SS\colon \BUC(G)\to \BUC(G)$ be convex and continuous. Let $(\Om,\FF,\nu)$ be a finite measure space with $\nu\neq 0$. Further, let $g\colon \Om\to \BUC(G)$ be bounded and $\FF$-$\BB(\BUC(G))$-measurable with separable range $g(\Om)$, i.e.~$g$ is Bochner integrable, such that $\SS g\colon \Om\to \BUC(G), \; \om\mapsto \SS \big(g(\om)\big)$ is again bounded. Then, $\SS g$ is Bochner integrable and we have that
 \[
  \SS\left(\frac{1}{\nu(\Om)}\int_\Om g\, {\rm d}\nu\right)\leq \frac{1}{\nu(\Om)}\int_\Om \SS g \, {\rm d}\nu.
 \]
\end{lemma}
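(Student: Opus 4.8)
The plan is to reduce the vector-valued inequality to a scalar Jensen inequality by testing against point evaluations. For fixed $x\in G$ I would introduce the functional
\[
\Phi_x\colon \BUC(G)\to\R,\qquad \Phi_x(f):=\big(\SS f\big)(x),
\]
and check that it is convex and continuous: convexity is inherited from the convexity of $\SS$ together with the fact that evaluation at $x$ is linear and order preserving, so that $\Phi_x(\la f+(1-\la)h)=\big(\SS(\la f+(1-\la)h)\big)(x)\le \la\Phi_x(f)+(1-\la)\Phi_x(h)$, while continuity follows from $|\Phi_x(f)-\Phi_x(h)|\le\|\SS f-\SS h\|_\infty$ and the continuity of $\SS$. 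Writing $m:=\frac{1}{\nu(\Om)}\int_\Om g\,{\rm d}\nu\in\BUC(G)$ for the Bochner integral, which exists since $g$ is Bochner integrable, the claim is equivalent to $\Phi_x(m)\le\big(\frac{1}{\nu(\Om)}\int_\Om \SS g\,{\rm d}\nu\big)(x)$ for every $x\in G$.

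Before running the estimate I would verify that $\SS g$ is Bochner integrable, so that the right-hand side is well defined. Since $\SS$ is continuous it is Borel measurable, hence $\SS g=\SS\circ g$ is $\FF$-$\BB(\BUC(G))$-measurable; moreover $\SS\big(g(\Om)\big)$ is separable as the continuous image of the separable set $g(\Om)$, and $\SS g$ is bounded by assumption. As $(\Om,\FF,\nu)$ is finite, strong measurability together with boundedness yields Bochner integrability.

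The core step is Jensen's inequality for the scalar convex functional $\Phi_x$. Since $\Phi_x$ is convex and finite and continuous on all of the Banach space $\BUC(G)$, it is subdifferentiable at $m$: there exists $\ell\in\BUC(G)^*$ with $\Phi_x(f)\ge\Phi_x(m)+\ell(f-m)$ for all $f\in\BUC(G)$. Inserting $f=g(\om)$ and integrating this pointwise (in $\om$) inequality over $\Om$ — the left-hand side $\om\mapsto\big(\SS g(\om)\big)(x)$ is bounded and measurable, and $\ell\circ g$ is integrable because $\ell$ is bounded and $g$ is Bochner integrable — gives
\[
\frac{1}{\nu(\Om)}\int_\Om\Phi_x\big(g(\om)\big)\,{\rm d}\nu(\om)\ge\Phi_x(m)+\ell\Big(\tfrac{1}{\nu(\Om)}\int_\Om\big(g(\om)-m\big)\,{\rm d}\nu(\om)\Big)=\Phi_x(m),
\]
where I used that the bounded linear functional $\ell$ commutes with the Bochner integral and that the integral of $g-m$ vanishes. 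Finally, point evaluation $f\mapsto f(x)$ is also a bounded linear functional, so it commutes with the Bochner integral and the left-hand side equals $\big(\frac{1}{\nu(\Om)}\int_\Om\SS g\,{\rm d}\nu\big)(x)$. Letting $x\in G$ vary yields the asserted pointwise inequality.

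I expect the only delicate point to be the existence of the subgradient $\ell$; this is exactly the place where continuity (equivalently, local Lipschitz continuity) of the finite convex functional $\Phi_x$ on the Banach space is used, guaranteeing a nonempty subdifferential at the interior point $m$. Everything else reduces to the standard interchange of bounded linear functionals with Bochner integration.
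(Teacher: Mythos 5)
Your argument is correct, but it takes a genuinely different route from the paper. The paper proves the inequality first for simple functions $g$, where it is just finite convexity of $\SS$ applied to the convex combination $\sum_k \frac{\nu(A_k)}{\nu(\Om)}f_k$, and then passes to the limit along a sequence of simple functions approximating $g$, using the continuity of $\SS$ and the very definition of the Bochner integral. You instead scalarize via the point evaluations $\Phi_x(f)=(\SS f)(x)$ and run the classical supporting-hyperplane proof of Jensen's inequality: a subgradient $\ell\in\partial\Phi_x(m)$ exists because a convex function that is finite and continuous on a normed space has nonempty subdifferential everywhere (a Hahn--Banach separation fact), and then only the interchange of bounded linear functionals with the Bochner integral is needed. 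Your route avoids the approximation/limit-interchange step entirely (which in the paper's version silently requires some domination to pass $\int\SS g_n\,{\rm d}\nu\to\int\SS g\,{\rm d}\nu$), at the cost of invoking the nontrivial subdifferentiability result; the paper's route is more elementary and self-contained. Both establish the Bochner integrability of $\SS g$ by the same Pettis-type reasoning (Borel measurability of $\SS\circ g$ plus separable range plus boundedness on a finite measure space), so that part is identical.
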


\begin{proof}
 Since $\SS$ is continuous, we obtain that $\SS g:\Omega\to\BUC(G)$ is $\FF$-$\BB(\BUC(G))$-measurable with separable range $(\SS g)(\Om)$ and thus Bochner integrable. If $g$ is a simple function, $\SS g$ is a simple function and the assertion follows by convexity of $\SS$. Since $g$ is $\FF$-$\BB(\BUC(G))$-measurable with separable range $g(\Om)$, there exists a sequence of simple functions $(g_n)_{n\in \N}$ with  $\lim_{n\to\infty}\|g_n(\om)-g(\om)\|_\infty= 0$ for all $\om\in \Om$. By continuity of $\SS$, we obtain $\lim_{n\to\infty}\|\SS g_n(\om)-\SS g(\om)\|_\infty=0$ for all $\om\in \Om$. Hence, by definition of Bochner's integral it follows that
 \begin{align*}
 \SS\left(\frac{1}{\nu(\Om)}\int_\Om g\, {\rm d}\nu\right)&=\lim_{n\to \infty} \SS\left(\frac{1}{\nu(\Om)}\int_\Om g_n\, {\rm d}\nu\right)\\&\leq \lim_{n\to \infty} \frac{1}{\nu(\Om)}\int_\Om \SS g_n\, {\rm d}\nu=\frac{1}{\nu(\Om)}\int_\Om \SS g\, {\rm d}\nu.
 \end{align*}
\end{proof}

Fix $f\in D$. Since $\{A_\la f\colon \la\in \La\}\subset \BUC(G)$ is bounded and uniformly equicontinuous, it follows that
\[
 \AA f:=\sup_{\la \in \La} A_\la f\in \BUC(G),
\]
where the supremum is understood pointwise.

\begin{lemma}\label{intsemi1}
Fix $f\in D$. Then, for $\pi\in P$ and $t>0$ it holds
 \[
  J_\pi f-f\leq \int_0^{\max \pi} \SS(s)\AA f \, {\rm d}s \quad\mbox{and}\quad \SS(t)f-f\leq \int_0^t \SS(s)\AA f \, {\rm d}s .
 \]
 \end{lemma}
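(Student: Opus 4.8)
The plan is to prove the single-operator estimate for $J_t$ first, then bootstrap to arbitrary partitions $J_\pi$ by induction on the number of sub-intervals, and finally obtain the bound for $\SS(t)$ by passing to the supremum over partitions. Throughout, all inequalities are understood pointwise in $\BUC(G)$, and the Bochner integral $\int_0^t \SS(s)\AA f\,ds$ is well defined because $s\mapsto \SS(s)\AA f$ is continuous by the strong continuity of $(\SS(s))_{s\ge 0}$.

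First I would settle the case $\pi=\{0,t\}$, that is $J_t$. Fix $\la\in\La$; since $f\in D\subset D(A_\la)$, the semigroup identity used already in the proof of Lemma~\ref{lip12} (cf.\ \cite[Lemma~II.1.3]{MR1721989}) gives $S_\la(t)f-f=\int_0^t S_\la(s)A_\la f\,ds$. Using $A_\la f\le \AA f$ together with the monotonicity of $S_\la(s)$, and then $S_\la(s)\AA f\le \SS(s)\AA f$ (which holds since $S_\la(s)\AA f\le J_s\AA f\le \SS(s)\AA f$ by \eqref{def:S}), one gets $S_\la(t)f-f\le \int_0^t \SS(s)\AA f\,ds$ for every $\la$. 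As the right-hand side does not depend on $\la$, taking the supremum over $\la$ yields the base case $J_tf-f\le \int_0^t \SS(s)\AA f\,ds$.

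For the inductive step, write $\pi=\{0=s_0<s_1<\dots<s_m=T\}$ with $m\ge 2$ and peel off the outermost operator: with $\pi'':=\{0,s_2-s_1,\dots,s_m-s_1\}\in P_{T-s_1}$ one has $J_\pi f=J_{s_1}J_{\pi''}f$. Setting $g:=J_{\pi''}f$, I would split
\[
 J_\pi f-f=\big(J_{s_1}g-J_{s_1}f\big)+\big(J_{s_1}f-f\big),
\]
where the second summand is bounded by $\int_0^{s_1}\SS(s)\AA f\,ds$ via the base case. The first summand is the crux, since $g\notin D$ in general and the single-step estimate is unavailable for it. I expect this to be the main obstacle, and the idea is to exploit that $J_{s_1}$ is a sublinear Markovian convolution: by subadditivity and monotonicity, $J_{s_1}g-J_{s_1}f\le J_{s_1}(g-f)\le J_{s_1}(w)$, where the induction hypothesis applied to $\pi''$ (and the same $f$) gives $g-f\le w:=\int_0^{T-s_1}\SS(r)\AA f\,dr$.

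It remains to bound $J_{s_1}(w)$, and this is exactly where Jensen's inequality (Lemma~\ref{intsemi}) enters, applied to the convex, continuous, positively homogeneous map $J_{s_1}$ and Lebesgue measure on $[0,T-s_1]$: positive homogeneity absorbs the length factor and yields $J_{s_1}(w)\le \int_0^{T-s_1}J_{s_1}\big(\SS(r)\AA f\big)\,dr$. Finally $J_{s_1}\big(\SS(r)\AA f\big)\le \SS(s_1)\SS(r)\AA f=\SS(s_1+r)\AA f$ by $J_{s_1}\le \SS(s_1)$ and the semigroup property from Proposition~\ref{dpp}, so the substitution $s=s_1+r$ turns the first summand into a bound by $\int_{s_1}^{T}\SS(s)\AA f\,ds$. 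Adding the two summands closes the induction with $\int_0^T\SS(s)\AA f\,ds$. The second inequality of the lemma then follows immediately: by Corollary~\ref{seq} pick $(\pi_n)\subset P_t$ with $J_{\pi_n}f\nearrow \SS(t)f$; since $\max\pi_n=t$, each $J_{\pi_n}f-f\le \int_0^t\SS(s)\AA f\,ds$, and letting $n\to\infty$ pointwise (point evaluation commutes with the Bochner integral) gives $\SS(t)f-f\le \int_0^t\SS(s)\AA f\,ds$.
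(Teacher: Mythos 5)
Your proof is correct and follows essentially the same route as the paper's: induction on the number of partition points, using the integral identity $S_\la(t)f-f=\int_0^t S_\la(s)A_\la f\,{\rm d}s$ for the base case, then sublinearity, monotonicity, Jensen's inequality (Lemma~\ref{intsemi}) and the semigroup property for the inductive step, and a supremum over partitions for the second inequality. The only (immaterial) difference is that you peel off the outermost operator $J_{s_1}$ and apply Jensen to $J_{s_1}$ before dominating it by $\SS(s_1)$, whereas the paper peels off the innermost increment and applies Jensen to $\SS(t_{m-1})$ after the domination $J_{\pi'}\le\SS(t_{m-1})$.
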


\begin{proof}
 Since $\AA f\in \BUC(G)$, the mapping $[0,\infty)\to \BUC(G)$, $s\mapsto \SS(s)\AA f$ is continuous. Therefore, the Bochner integrals are well-defined. Then, for all $t,h>0$ we have
 \begin{align}\label{intsemieq}
  J_h f-f&=\sup_{\la \in \La} S_\la(h)f-f=\sup_{\la \in \La} \int_0^h S_\la(s)A_\la f\, {\rm d} s \nonumber \\& \leq \int_0^h\SS(s)\AA f\,{\rm d} s=\int_t^{t+h}\SS(s-t)\AA f\, {\rm d}s.
 \end{align}

We prove the first inequality by induction on $m=\# \pi$. If $m=1$, i.e.~if $\pi=\{0\}$, the statement is trivial. Hence, assume that
  \[
  J_{\pi'} f-f\leq \int_0^{\max \pi'} \SS(s)\AA f \, {\rm d}s
 \]
 for all $\pi'\in P$ with $\#\pi'=m$ for some $m\in \N$. Let $\pi=\{t_0,t_1,\ldots, t_m\}$ with $0=t_0< t_1< \ldots < t_m$ and $\pi':=\pi\sm \{t_m\}$. Then, it follows from \eqref{intsemieq} and Lemma \ref{intsemi} that
 \begin{align*}
  J_\pi f -J_{\pi'}f&\leq J_{\pi'}\big(J_{t_m-t_{m-1}}f-f\big)\leq J_{\pi'}\Big(\int_{t_{m-1}}^{t_m} \SS(s-t_{m-1})\AA f\, {\rm d}s\Big)\\
  &\leq \SS(t_{m-1})\Big(\int_{t_{m-1}}^{t_m} \SS(s-t_{m-1})\AA f\, {\rm d}s\Big)\leq\int_{t_{m-1}}^{t_m}\SS(s)\AA f\, {\rm d}s,
 \end{align*}
 where the first inequality follows from the sublinearity of $J_{\pi'}$.
 By induction, we thus get
 \begin{align*}
 J_\pi f -f&= \big(J_\pi f -J_{\pi'}f\big)+\big(J_{\pi'} f-f\big)\leq \int_{t_{m-1}}^{t_m}\SS(s)\AA f\, {\rm d}s+\int_{0}^{t_{m-1}}\SS(s)\AA f\, {\rm d}s\\
 &=\int_0^{\max \pi}\SS(s)\AA f\, {\rm d}s.
 \end{align*}

In particular, for every $\pi\in P_t$ it holds
 \[
  J_\pi f-f\leq \int_0^t\SS(s)\AA f\, {\rm d}s.
 \]
Taking the supremum over all $\pi\in P_t$ yields the second assertion.
\end{proof}

\begin{lemma}\label{appen3}
 Let $M\subset \BUC(G)$ be bounded and uniformly equicontinuous. Then,
 \[
  \sup_{\la\in \La}\sup_{g\in M}\|S_\la(t)g-g\|_\infty\to 0, \quad \mbox{as }t\searrow 0.
 \]
\end{lemma}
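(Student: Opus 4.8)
The plan is to reduce the doubly-uniform statement to the decay of a \emph{single} auxiliary cut-off function, controlled uniformly in $\la$. Set $C:=\sup_{g\in M}\|g\|_\infty<\infty$ (finite since $M$ is bounded), fix $\ep>0$, and use uniform equicontinuity of $M$ together with translation invariance of $d$ to pick $\de>0$ with $|g(x+y)-g(x)|\le \ep$ whenever $d(y,0)\le\de$, for all $g\in M$ and $x\in G$. I introduce the Lipschitz cut-off $\psi_\de(y):=\big(d(y,0)/\de\big)\wedge 1$, which lies in $\BUC(G)$ and satisfies $0\le\psi_\de\le1$, $\psi_\de(0)=0$, and $1_{G\sm B(0,\de)}\le\psi_\de$. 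The first step records the pointwise bound, valid for every $g\in M$ and $x\in G$,
\[
 |g(x+y)-g(x)|\le \ep+2C\,\psi_\de(y),\qquad y\in G,
\]
which follows by splitting into $d(y,0)\le\de$ (equicontinuity) and $d(y,0)>\de$ (where $\psi_\de=1$ and $\|g\|_\infty\le C$).

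\textbf{Key estimate.} Next I would exploit the Markovian convolution identity $(S_\la(t)g)(x)=(S_\la(t)g_x)(0)$ together with monotonicity, linearity and $S_\la(t)\mathbf 1=\mathbf 1$. Applying $S_\la(t)$ to the two-sided inequality $-\ep-2C\psi_\de\le g_x-g(x)\le \ep+2C\psi_\de$ (as functions of $y$, with $x$ fixed) and evaluating at $0$ yields
\[
 |(S_\la(t)g)(x)-g(x)|\le \ep+2C\,(S_\la(t)\psi_\de)(0),
\]
whose right-hand side is independent of both $x$ and $g$. Taking suprema gives
\[
 \sup_{\la\in\La}\sup_{g\in M}\|S_\la(t)g-g\|_\infty\le \ep+2C\sup_{\la\in\La}(S_\la(t)\psi_\de)(0),
\]
so it remains to show that $\sup_{\la\in\La}(S_\la(t)\psi_\de)(0)\to0$ as $t\searrow0$ for the \emph{fixed} function $\psi_\de$.

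\textbf{Uniform decay of the cut-off.} Here the density assumption (A2) enters. Given $\ep'>0$, I would choose $f\in D$ with $\|\psi_\de-f\|_\infty\le\ep'$; since $S_\la(t)$ is a contraction and $\psi_\de(0)=0$, this gives $f(0)\le\ep'$ and $(S_\la(t)\psi_\de)(0)\le (S_\la(t)f)(0)+\ep'$. For $f\in D\subset D(A_\la)$ the fundamental identity $S_\la(t)f-f=\int_0^tS_\la(s)A_\la f\,{\rm d}s$ combined with the contraction property gives the crucial \emph{$\la$-uniform} rate
\[
 \|S_\la(t)f-f\|_\infty\le t\,\|A_\la f\|_\infty\le t\,L_f,
\]
whence $(S_\la(t)f)(0)\le f(0)+tL_f\le \ep'+tL_f$. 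Altogether $\sup_{\la\in\La}(S_\la(t)\psi_\de)(0)\le 2\ep'+tL_f$, so $\limsup_{t\searrow0}\sup_{\la\in\La}(S_\la(t)\psi_\de)(0)\le2\ep'$; letting $\ep'\to0$ yields the claim, and then letting $\ep\to0$ finishes the proof.

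\textbf{Main obstacle.} The only genuine difficulty is securing uniformity in $\la$: for a fixed $\la$ strong continuity at $0$ is automatic, but the rate could a priori degenerate as $\la$ varies. This is exactly where (A2) is indispensable, since the finite constant $L_f=\sup_{\la\in\La}\|A_\la f\|_\infty$ furnishes a $\la$-independent modulus for functions in $D$, and the density of $D$ together with contractivity of $S_\la(t)$ transfers this control to the cut-off $\psi_\de$ (which need not itself lie in $D$). Everything else uses only the three structural properties of a linear Markovian convolution semigroup: positivity, preservation of constants, and the convolution identity.
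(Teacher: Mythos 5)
Your proof is correct and follows essentially the same route as the paper: the identical cut-off decomposition $|g(x+y)-g(x)|\le\ep+2C\varphi(y)$ reduces everything to the $\la$-uniform decay of $S_\la(t)$ applied to one fixed function vanishing at $0$. The only cosmetic difference is that the paper bounds $(S_\la(t)\varphi)(0)$ by $(\SS(t)\varphi)(0)$ and cites Lemma \ref{lip1}, whereas you inline the underlying density-plus-$L_f$ argument directly for the family $(S_\la(t))_{\la\in\La}$; both rest on exactly the same use of (A2).
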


\begin{proof}
 Let $\ep>0$ and $C:=\sup_{g\in M}\|g\|_\infty$. Then, there exists some $\de>0$ such that
 \[
  \sup_{g\in M}|g(x)-g(y)|\leq \ep
 \]
 for all $x,y\in G$ with $d(x,y)\leq \de$. Let $\varphi(y):=\frac{1}{\de}\big(d(y,0)\wedge \de\big)$ for all $y\in G$. Then, $\varphi\in \BUC(G)$, $0\leq \varphi\leq 1$, $\varphi(0)=0$ and $\varphi(y)=1$ for all $y\in G\sm B(0,\de)$.
Fix $\la\in \La$, $g\in M$ and $x\in G$. Since $g(x+\cdot )-g(x)\le\varepsilon+2C\varphi$, we get
 \[
  \big|\big( S_\la(t)g\big)(x)-g(x)\big|=\big|\big[S_\la(t)\big(g(x+\cdot )-g(x)\big)\big](0)\big|\leq \ep +2C\big(\SS(t)\varphi\big)(0),
 \]
 so that
 \[
  \sup_{\la\in \La}\sup_{g\in M}\|S_\la(t)g-g\|_\infty\leq \ep +2C\big(\SS(t)\varphi\big)(0).
 \]
 Since $\big(\SS(t)\varphi\big)(0)\to 0$ as $t\searrow 0$ by Lemma \ref{lip1}, we obtain the assertion.
\end{proof}

\begin{lemma}\label{generator1}
 For every $f\in D$ one has
 \[
  \lim_{h\searrow 0}\bigg\|\frac{\SS(h)f-f}{h}- \AA f\bigg\|_\infty=0.
 \]
\end{lemma}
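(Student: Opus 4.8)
The plan is to sandwich $\frac{\SS(h)f-f}{h}-\AA f$ between two quantities that vanish in $\|\cdot\|_\infty$ as $h\searrow 0$: the upper estimate comes from the integral inequality in Lemma~\ref{intsemi1}, while the lower estimate comes from the domination $\SS(h)\ge S_\la(h)$ combined with the uniform-in-$\la$ control provided by Lemma~\ref{appen3}.

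For the upper bound, fix $f\in D$. By Lemma~\ref{intsemi1} we have $\SS(h)f-f\le \int_0^h \SS(s)\AA f\, {\rm d}s$ for every $h>0$. Subtracting $h\,\AA f$ and dividing by $h$ yields the pointwise inequality
\[
\frac{\SS(h)f-f}{h}-\AA f\le \frac1h\int_0^h\big(\SS(s)\AA f-\AA f\big)\, {\rm d}s.
\]
Since $\AA f\in\BUC(G)$, Lemma~\ref{lip1} gives $\|\SS(s)\AA f-\AA f\|_\infty\to 0$ as $s\searrow 0$, so the sup-norm of the averaged right-hand side tends to $0$. This controls the positive part of $\frac{\SS(h)f-f}{h}-\AA f$.

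For the lower bound, I would exploit that $\SS(h)f\ge J_hf\ge S_\la(h)f$ for every $\la\in\La$, together with the identity $S_\la(h)f-f=\int_0^h S_\la(s)A_\la f\, {\rm d}s$, valid because $f\in D\subset D(A_\la)$. Writing $\int_0^h S_\la(s)A_\la f\, {\rm d}s=h\,A_\la f+\int_0^h\big(S_\la(s)A_\la f-A_\la f\big)\, {\rm d}s$, this gives
\[
\frac{\SS(h)f-f}{h}\ge A_\la f+\frac1h\int_0^h\big(S_\la(s)A_\la f-A_\la f\big)\, {\rm d}s.
\]
The crucial point---and the main obstacle---is to bound the last term uniformly in $\la$, so that taking the supremum over $\la$ recovers $\AA f=\sup_{\la\in\La} A_\la f$. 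This is exactly where the definition of $D$ is used: the family $M:=\{A_\la f:\la\in\La\}$ is bounded and uniformly equicontinuous, so Lemma~\ref{appen3} yields that $\eta(s):=\sup_{\la\in\La}\sup_{g\in M}\|S_\la(s)g-g\|_\infty\to 0$ as $s\searrow 0$. In particular $\|S_\la(s)A_\la f-A_\la f\|_\infty\le\eta(s)$ for every $\la$, whence the remainder is bounded in absolute value by $\frac1h\int_0^h\eta(s)\, {\rm d}s$, uniformly in $\la$ and in $x\in G$. Taking the pointwise supremum over $\la$ then gives
\[
\frac{\SS(h)f-f}{h}-\AA f\ge -\frac1h\int_0^h\eta(s)\, {\rm d}s,
\]
whose right-hand side vanishes as $h\searrow 0$.

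Combining the two estimates sandwiches $\frac{\SS(h)f-f}{h}-\AA f$ between two functions whose sup-norms tend to $0$, which yields the claim. The only delicate step is the uniformity over the (possibly uncountable) index set $\La$ in the lower bound; the definition of $D$ as the set of $f$ for which $\{A_\la f:\la\in\La\}$ is bounded and uniformly equicontinuous is precisely tailored so that Lemma~\ref{appen3} applies and so that the supremum over $\la$ can be exchanged with the limit $h\searrow 0$.
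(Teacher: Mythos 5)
Your proposal is correct and follows essentially the same route as the paper: the upper estimate via Lemma~\ref{intsemi1} combined with the strong continuity of $\SS(s)$ applied to $\AA f$, and the lower estimate via $\SS(h)f\ge S_\la(h)f=f+\int_0^h S_\la(s)A_\la f\,{\rm d}s$ with the uniformity in $\la$ supplied by Lemma~\ref{appen3} applied to the bounded, uniformly equicontinuous family $\{A_\la f:\la\in\La\}$. The paper merely phrases the error control with a fixed $\ep$ and threshold $h_0$ instead of integrating an error function $\eta$, which is an immaterial difference.
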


\begin{proof}
 Let $\ep>0$. Then, by Lemma \ref{appen3} and Lemma \ref{lip1} there exists $h_0>0$ such that
 \[
  S_\la(h) A_\la f-A_\la f\geq -\ep\quad\mbox{for all } \la \in \La
 \]
 and
 \[
  \SS(h)\AA f-\AA f\leq\ep
 \]
 for all $0<h\leq h_0$. Then, for every $0<h\leq h_0$ and $\lambda\in\Lambda$ we get
 \[
  \SS(h)f-f \geq S_\la(h)f-f =\int_0^h S_\la(s) A_\la f\, {\rm d}s \geq \big(A_\la f-\ep \big)h
 \]
 so that
 \begin{equation}\label{gen1}
  \frac{\SS(h)f-f}{h}\geq \AA f-\ep.
 \end{equation}
 On the other hand, it follows from Lemma \ref{intsemi1} that for $0<h\le h_0$
 \[
  \SS(h)f-f-h\AA f\leq \int_0^h \SS(s)\AA f\, {\rm d}s-h\AA f=\int_0^h \SS(s)\AA f-\AA f\, {\rm d}s\leq h\ep,
 \]
 which yields
 \[
  \frac{\SS(h)f-f}{h}-\AA f\leq  \ep.
 \]
 Together with \eqref{gen1}, we obtain
 \[
  \bigg\|\frac{\SS(h)f-f}{h}-\AA f\bigg\|_\infty\leq \ep
 \]
 for all $0<h\leq h_0$.
\end{proof}

% Allgemein mit HG und Generator
\begin{proposition}\label{viscosity}
 For $f\in \BUC(G)$ the function \[u(t,x)=\big(u(t)\big)(x)=\big(\SS(t)f\big)(x),\quad t\ge 0, \,x\in G,\]
 is a $D$-viscosity solution of the fully nonlinear PDE
\begin{eqnarray*}
  u_t(t,x)&=&\sup_{\la\in \La} \big(A_\la u(t)\big)(x), \quad (t,x)\in (0,\infty)\times G,\\
  u(0,x)&=&f(x),\quad x\in G.
 \end{eqnarray*}
\end{proposition}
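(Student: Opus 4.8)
The plan is to verify the three defining properties of a $D$-viscosity solution in turn: continuity of $u$, the subsolution inequality, and the supersolution inequality. Continuity of $u\colon[0,\infty)\to\BUC(G)$ follows directly from the semigroup structure of Proposition \ref{dpp} together with the strong continuity of Lemma \ref{lip1}. Indeed, for $t\ge 0$ one has $\|\SS(t+h)f-\SS(t)f\|_\infty=\|\SS(h)\SS(t)f-\SS(t)f\|_\infty\to 0$ as $h\searrow 0$ by applying strong continuity to the fixed function $\SS(t)f$, while for $t>0$ the $1$-Lipschitz property of the sublinear Markovian convolution $\SS(t-h)$ gives $\|\SS(t)f-\SS(t-h)f\|_\infty\le\|\SS(h)f-f\|_\infty\to 0$. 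Hence $u$ is both right- and left-continuous, and throughout I shall use that each $\SS(h)$ is monotone, subadditive, positively homogeneous, $1$-Lipschitz, and fixes constants, these being the properties of a sublinear Markovian convolution.

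For the subsolution property, fix $t_0>0$, $x_0\in G$ and a differentiable $\psi\colon(0,\infty)\to\BUC(G)$ with $\psi(s)\in D$, $\psi(s)\ge u(s)$ for all $s>0$ and $\psi(t_0)(x_0)=u(t_0)(x_0)$. I would start from the backward semigroup identity $u(t_0)=\SS(h)u(t_0-h)$, valid for $0<h<t_0$ by Proposition \ref{dpp}, and combine monotonicity of $\SS(h)$ with $u(t_0-h)\le\psi(t_0-h)$ to obtain $u(t_0)\le\SS(h)\psi(t_0-h)$. The decisive step is then to expand $\psi(t_0-h)=\psi(t_0)-h\psi'(t_0)+r(h)$ with $\|r(h)\|_\infty=o(h)$, which is legitimate since $\psi$ is differentiable into $\BUC(G)$ and hence $\psi'(t_0)\in\BUC(G)$. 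Subadditivity of $\SS(h)$ yields, pointwise, $\SS(h)\psi(t_0-h)\le\SS(h)\psi(t_0)+\SS(h)[-h\psi'(t_0)]+\SS(h)[r(h)]$. Evaluating at $x_0$, the last term is bounded by $\|r(h)\|_\infty=o(h)$, positive homogeneity together with the strong continuity of Lemma \ref{lip1} gives $(\SS(h)[-h\psi'(t_0)])(x_0)=h(\SS(h)[-\psi'(t_0)])(x_0)=-h\psi'(t_0)(x_0)+o(h)$, and, crucially, the generator estimate of Lemma \ref{generator1} is applied only to the fixed function $\psi(t_0)\in D$, giving $(\SS(h)\psi(t_0))(x_0)=\psi(t_0)(x_0)+h(\AA\psi(t_0))(x_0)+o(h)$. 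Inserting $u(t_0)(x_0)=\psi(t_0)(x_0)$, the terms $\psi(t_0)(x_0)$ cancel, and dividing by $h>0$ and letting $h\searrow 0$ produces $\partial_t\psi(t_0,x_0)=\psi'(t_0)(x_0)\le(\AA\psi(t_0))(x_0)=\sup_{\la\in\La}(A_\la\psi(t_0))(x_0)$, as required.

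The supersolution property follows along entirely parallel lines from the same identity $u(t_0)=\SS(h)u(t_0-h)$, now using $\psi(s)\le u(s)$ so that monotonicity gives $u(t_0)\ge\SS(h)\psi(t_0-h)$. After the same expansion $\psi(t_0-h)=\psi(t_0)-h\psi'(t_0)+r(h)$, I would invoke the reverse subadditivity inequality $\SS(h)(a+b)\ge\SS(h)a-\SS(h)(-b)$, which is immediate from subadditivity, to bound $\SS(h)\psi(t_0-h)$ from below at $x_0$ by $(\SS(h)\psi(t_0))(x_0)-h\psi'(t_0)(x_0)+o(h)$; applying Lemma \ref{generator1} once more to the fixed $\psi(t_0)$ and letting $h\searrow 0$ then yields $\partial_t\psi(t_0,x_0)\ge(\AA\psi(t_0))(x_0)$.

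The step I expect to be the main obstacle, and the reason for the $\BUC$-expansion device above, is the handling of the nonlinear generator along the varying argument $\psi(t_0-h)$. A naive backward estimate via Lemma \ref{intsemi1} would instead produce the term $\tfrac1h\int_0^h(\SS(s)\AA\psi(t_0-h))(x_0)\,{\rm d}s$, whose limit requires the continuity of $s\mapsto\AA\psi(s)=\sup_{\la\in\La}A_\la\psi(s)$ near $t_0$. Since $\psi$ is only assumed differentiable into $\BUC(G)$ with values in $D$, and the operators $A_\la$ are unbounded, there is no control of $\sup_{\la\in\La}\|A_\la(\psi(t_0-h)-\psi(t_0))\|_\infty$ from $\BUC$-convergence alone, so this route is blocked. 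Expanding $\psi(t_0-h)$ in $\BUC(G)$ and pushing the perturbation $-h\psi'(t_0)+r(h)$ through the sublinear map $\SS(h)$ circumvents this entirely, because the generator is then only ever evaluated at the single fixed function $\psi(t_0)\in D$, for which Lemma \ref{generator1} applies directly.
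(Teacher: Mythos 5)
Your proof is correct and follows essentially the same route as the paper: both start from the dynamic programming identity $u(t)=\SS(h)\SS(t-h)f$, use monotonicity and sub/superadditivity of the sublinear Markovian convolution $\SS(h)$ to isolate the increment $\psi(t-h)-\psi(t)$, and apply Lemma \ref{generator1} only to the fixed function $\psi(t)\in D$ together with the strong continuity of Lemma \ref{lip1} for the term $\SS(h)(\mp\psi_t(t))$. The only difference is notational (your explicit Taylor remainder $r(h)$ with $o(h)$ bookkeeping versus the paper's $\ep/3$ estimates), and your closing observation about why the generator must only ever be evaluated at $\psi(t)$ is exactly the point of the paper's decomposition.
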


\begin{proof}
Fix $t>0$ and $x\in G$. We first show that $u$ is a $D$-viscosity subsolution. Let $\psi\colon (0,\infty)\to \BUC(G)$ differentiable with $\big(\psi(t)\big)(x)=\big(u(t)\big)(x)$, $\psi(s)\geq u(s)$ and $\psi(s)\in D$ for all $s>0$. Then, for every $h\in (0,t)$, it follows from
Proposition \ref{dpp} that
\begin{align*}
 0&=\frac{\SS(h)\SS(t-h)f-\SS(t)f}{h}=\frac{\SS(h)u(t-h)-u(t)}{h}\\
 &\leq \frac{\SS(h)\psi(t-h)-u(t)}{h} \leq \frac{\SS(h)\big(\psi(t-h)-\psi(t)\big)+\SS(h)\psi(t)-u(t)}{h}\\
 &= \SS(h)\bigg(\frac{\psi(t-h)-\psi(t)}{h}\bigg)+\frac{\SS(h)\psi(t)-\psi(t)}{h}+\frac{\psi(t)-u(t)}{h}.
\end{align*}
Let $\ep>0$. Then, by Lemma \ref{generator1} and Lemma \ref{lip1}, there exists $0<h_0<t$ such that
for all $0<h<h_0$ one has
\[
 \frac{\SS(h)\psi(t)-\psi(t)}{h}\leq \AA\psi(t)+\frac{\ep}{3},\quad
 \frac{\psi(t-h)-\psi(t)}{h}\leq -\psi_t(t)+\frac{\ep}{3},
\]
and
\[
 \SS(h)\big(-\psi_t(t)\big)\leq -\psi_t(t)+\frac{\ep}{3}.
\]
Hence, we get
\begin{align*}
 0&\leq \SS(h)\big(-\psi_t(t)\big)+\AA\psi(t)+\frac{2\ep}{3}+\frac{\psi(t)-u(t)}{h}\\&\leq -\psi_t(t)+\AA\psi(t)+\ep+\frac{\psi(t)-u(t)}{h}
\end{align*}
for all $0<h<h_0$. Since $\big(\psi(t)\big)(x)=\big(u(t)\big)(x)$ we obtain that
\[
 0\leq -\big(\psi_t(t)\big)(x)+\big(\AA\psi(t)\big)(x)+\ep.
\]
Letting $\ep\searrow 0$ yields $\big(\psi_t(t)\big)(x) \leq \big(\AA\psi(t)\big)(x)$.

To show that $u$ is a $D$-viscosity supersolution, let $\psi\colon (0,\infty)\to \BUC(G)$ differentiable with $\big(\psi(t)\big)(x)=\big(u(t)\big)(x)$, $\psi(s)\in D$ and $\psi(s)\leq u(s)$ for all $s>0$. By Proposition \ref{dpp}, for all $h>0$ with
$0<h<t$ we get
\begin{align*}
 0&=\frac{\SS(t)f-\SS(h)\SS(t-h)f}{h}=\frac{u(t)-\SS(h)u(t-h)}{h}\leq \frac{u(t)-\SS(h)\psi(t-h)}{h}\\
 &= \frac{u(t)-\psi(t)}{h}+\frac{\psi(t)-\SS(h)\psi(t)}{h}+\frac{\SS(h)\psi(t)-\SS(h)\psi(t-h)}{h}\\
 &\leq \frac{u(t)-\psi(t)}{h}+\frac{\psi(t)-\SS(h)\psi(t)}{h}+\SS(h)\bigg(\frac{\psi(t)-\psi(t-h)}{h}\bigg)
\end{align*}
Let $\ep>0$. Then, by Lemma \ref{generator1} and Lemma \ref{lip1} there exists $0<h_0<t$ such that
\[
 \frac{\psi(t)-\SS(h)\psi(t)}{h}\leq -\AA\psi(t)+\frac{\ep}{3},
\]
\[
 \frac{\psi(t)-\psi(t-h)}{h}\leq \psi_t(t)+\frac{\ep}{3},
\]
and
\[
 \SS(h)\big(\psi_t(t)\big)\leq \psi_t(t)+\frac{\ep}{3}
\]
for all $0<h<h_0$. We thus get
\[
 0\leq \frac{u(t)-\psi(t)}{h}-\AA\psi(t)+\SS(h)\psi_t(t)+\frac{2\ep}{3}\leq \frac{u(t)-\psi(t)}{h}-\AA\psi(t)+\psi_t(t)+\ep
\]
for all $0<h<h_0$. Since $\big(\psi(t)\big)(x)=\big(u(t)\big)(x)$ we obtain that
\[
 0\leq -\big(\AA\psi(t)\big)(x)+\big(\psi_t(t)\big)(x)+\ep.
\]
Letting $\ep\searrow 0$ yields $\big(\psi_t(t)\big)(x) \geq \big(\AA\psi(t)\big)(x)$.
\end{proof}

In order to complete the proof of Theorem \ref{main}, it remains to show that there exists a set $\PP$ of probability measures
and a stochastic process $(X_t)_{t\geq 0}$ on a measurable space $(\Om,\FF)$  such that the viscosity solution in Proposition \ref{viscosity} is of the form
\[
 u(t,x)=\sup_{\P\in \PP} \E_\P(f(x+X_t))
\]
for $t\geq 0$ and $x\in G$. This is shown in the following proposition for which the
assumption (A2) is not needed.

\begin{proposition}\label{dualrep}
 There exists a set $\PP$ of probability measures on a measurable space $(\Om,\FF)$ and an $\EE$-L\'evy process $(X_t)_{t\geq 0}$ such that
 \[
  \big(\SS(t)f\big)(x)=\EE(f(x+X_t))=\sup_{\P\in \PP} \E_\P(f(x+X_t))
 \]
 for all $f\in \BUC(G)$, $t\ge 0$ and $x\in G$, where $\EE(Y)=\sup_{\P\in \PP}\E_\P(Y)$, $Y\in \LL^\infty(\Om,\FF)$.
\end{proposition}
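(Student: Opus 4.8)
The plan is to realise $\EE=\sup_{\P\in\PP}\E_\P$ by building genuinely countably additive measures directly out of the linear semigroups, rather than by an abstract duality: since here we only have continuity from below (and not from above, so Theorem~\ref{equivlevy} does not apply), a Hahn--Banach argument would only produce finitely additive functionals. I would work on the canonical path space $\Om:=G^{[0,\infty)}$ with coordinate process $X_t(\om):=\om(t)$ and $\FF:=\si(X_t:t\ge0)$. Each $S_\la(h)$ is a linear Markovian convolution, hence $(S_\la(h)g)(x)=\int_G g(x+y)\,\mu_\la^h(\mathrm{d}y)$ for a probability measure $\mu_\la^h$, with measurable transition kernel $k_\la^h(x,B):=\mu_\la^h(B-x)$. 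Over a finite partition $\pi=\{0=t_0<\dots<t_m\}$, with $h_j:=t_j-t_{j-1}$, I would glue one kernel per interval of the form $\ka_j(z,\cdot):=k_{\la_j(z)}^{\,h_j}(z,\cdot)$, where $z\mapsto\la_j(z)$ is constant on each set of a fixed countable Borel partition of $G$; by the Ionescu--Tulcea theorem this yields a countably additive law $\P$ on $(\Om,\FF)$ with $X_0=0$ under which the increments over the intervals of $\pi$ are independent, and on each interval the genuine linear dynamics define the process at all times. Let $\PP$ collect all such measures and set $\EE(Y):=\sup_{\P\in\PP}\E_\P(Y)$ for $Y\in\LL^\infty(\Om,\FF)$. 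As a supremum of countably additive linear expectations, $\EE$ is monotone, constant-preserving, sublinear and, since suprema commute with increasing limits, continuous from below, so $(\Om,\FF,\EE)$ is a sublinear expectation space.

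The heart of the matter is the identity $(\SS(t)f)(x)=\sup_{\P\in\PP}\E_\P(f(x+X_t))$. For ``$\ge$'' (where $\E_\P$ dominates), note that along any partition the one-step domination $S_\la(h)\le J_h\le\SS(h)$ (pointwise, cf.~\eqref{def:S}) propagates by monotonicity, so every controlled expectation is at most $(J_\pi f)(x)\le(\SS(t)f)(x)$. The reverse inequality is the main obstacle. Using $(\SS(t)f)(x)=\sup_{\pi\in P_t}(J_\pi f)(x)$, I fix $\pi$ and $\ep>0$ and approximate $(J_\pi f)(x)$ from below by a genuine controlled expectation via backward dynamic programming along the \emph{true} value functions $g_m:=f$, $g_{j-1}:=J_{h_j}g_j\in\BUC(G)$. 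At step $j$ I approximate $J_{h_j}=\sup_\la S_\la(\cdot)$ pointwise from below by a piecewise-constant-in-space selection $z\mapsto\la_j(z)$: choosing a Borel partition of $G$ into sets of diameter $<\de$ and, on each, a near-optimal $\la$ at a reference point suffices, because the family $\{S_\la(h_j)g_j:\la\in\La\}$ is uniformly equicontinuous with the modulus of $g_j$ (a Markovian convolution never increases the modulus of continuity, as $|(S_\la(h)g)(x)-(S_\la(h)g)(z)|\le\sup_y|g(x+y)-g(z+y)|$) and $G$ is separable. Crucially this needs no measurable-selection theorem and no topology on $\La$. Writing $\ka_j$ for the resulting kernels and $W_j$ for the value-to-go of the induced chain, the bounds $\int g_j\,\mathrm{d}\ka_j(z,\cdot)\in[\,g_{j-1}(z)-\ep_j,\;g_{j-1}(z)\,]$ combine with $\|g_{j-1}-W_{j-1}\|_\infty\le\ep_j+\|g_j-W_j\|_\infty$ and telescope (since $W_m=g_m$) to $\|g_0-W_0\|_\infty\le\sum_j\ep_j$, so the corresponding $\P$ gives $\E_\P(f(x+X_t))=W_0(x)\ge(J_\pi f)(x)-\ep$.

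Finally I would verify that $(X_t)_{t\ge0}$ is an $\EE$-L\'evy process. The initial law $\EE(f(X_0))=(\SS(0)f)(0)=f(0)$ and the continuity in distribution $\EE(f(X_t))=(\SS(t)f)(0)\to f(0)$ as $t\searrow0$ are immediate from the identity above and the strong continuity of Lemma~\ref{lip1}. For the stationary and independent increments, one first upgrades the one-dimensional identity to the Markov/tower property for the finite-dimensional distributions, namely that $\EE(g(X_{t_1},\dots,X_{t_n}))$ equals the nested application of the operators $\SS(t_i-t_{i-1})$; this is proved by the same dynamic-programming/domination argument as in the previous paragraph, using that $\PP$ is arranged to be closed under concatenation at the times $t_i$ and under shifting the controls. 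Stationarity and independence then follow exactly as in the computation \eqref{prooflevy1}, using that each $\SS(t)$ is a Markovian convolution together with the semigroup property of Proposition~\ref{dpp}. Since $\EE=\sup_{\P\in\PP}\E_\P$ on $\LL^\infty(\Om,\FF)$ holds by definition, this completes the proof, the decisive point being the lower bound of the second paragraph, where the equicontinuity-plus-piecewise-constant-selection device replaces the abstract duality that continuity from below alone cannot supply.
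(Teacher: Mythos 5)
Your proposal is correct and follows essentially the same route as the paper: both realize $\SS(t)$ as the supremum of expectations over laws obtained by concatenating the linear L\'evy dynamics along a finite partition with piecewise-constant, state-dependent choices of $\la$, the lower bound coming from near-optimal selections on a fine partition of $G$ justified by the uniform equicontinuity of $\{S_\la(h)g:\la\in\La\}$. The only differences are cosmetic: the paper constructs the controlled processes pathwise from a background family of independent L\'evy processes and uses finite measurable partitions, whereas you build the laws directly via Ionescu--Tulcea and allow countable partitions, and your explicit error-propagation estimate spells out the backward induction that the paper leaves implicit.
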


\begin{proof}
 Let $(\Om_0,\FF_0,\Q)$ be a probability space such that there exists an independent family of L\'evy processes $(X^\la)_{\la\in \La}$, where $X^\la$ is a L\'evy process with generator $A_\la$, i.e.
 \[
  \E_\Q(f(x+X_t^\la))=\big(S_\la(t)f\big)(x)
 \]
 for all $x\in G$, $t\geq 0$ and $f\in \BUC(G)$. We call $\phi=(\phi_t)_{t\geq 0}$ $\La$-simple if there exist $k\in \N$ and $0=t_0<t_1<\ldots< t_k$ such that
 \begin{equation}\label{simpleproc}
  \phi_t=\sum_{j=0}^{k-1}\phi^{(j)}1_{(t_j,t_{j+1}]}(t)+\phi^{(k)}1_{(t_k,\infty)}(t),
 \end{equation}
 where for $j\in \{0,\ldots, k\}$ the mapping
 \[
  \phi^{(j)}\colon G\to\La
 \]
 is measurable with finite range. For such a $\phi$ we define $X^\phi_{t_0}=X^\phi_0:=0$ and inductively, for $j\in \{0,\ldots, k-1\}$, we then define $\la_j:=\phi^{(j)}(X^\phi_{t_j})$ and
\[
 X^\phi_t:=X^\phi_{t_j}+X^{\la_j}_t-X^{\la_j}_{t_j}
\]
for all $t_j<t\leq t_{j+1}$. Finally, we define $\la_k:=\phi^{(k)}(X^\phi_{t_k})$ and
\[
 X^\phi_t:= X^\phi_{t_k}+X^{\la_k}_t-X^{\la_k}_{t_k}
\]
for $t>t_k$. Then, $X^\phi=(X^\phi_t)_{t\geq 0}$ can be interpreted as a stochastic integral w.r.t. the $\La$-simple process $\phi$. Let $h_1,h_2>0$. Then,
 \[
  \big(J_{h_1}J_{h_2}f\big)(x)=\sup_{\eta\in \QQ}\; \int_G f(x+\, \cdot\, )\, {\rm d}\eta
 \]
 for $f\in \BUC(G)$, where $\QQ$ is the set of probability measures of the form
 \[
  \eta \colon \BB \to [0,1],\quad B\mapsto \bigg[S_{\la_0}(h_1)\bigg(\sum_{k=1}^n1_{B_k}S_{\la_k}(h_2)1_B\bigg)\bigg](0)
 \]
with $k\in \N$, $\la_0,\la_1,\ldots,\la_n\in \La$ and $B_1\ldots, B_n\in \BB$ is a measurable partition of $G$. Here, we identify $S_\la(h)$ for $\la\in \La$ and $h>0$ with the unique translation invariant kernel associated to it. Inductively, we therefore obtain that $J_\pi$ with $\pi=\{t_0,\ldots, t_k\}$ admits a dual representation in terms of distributions of stochastic integrals w.r.t. $\La$-simple processes of the form \eqref{simpleproc}. For $t\geq 0$ and $f\in \BUC(G)$, we thus obtain that
\[
 \big(\SS(t)f\big)(0)=\sup_{\phi\; \La\text{-simple}} \E_\Q(f(X_t^\phi)).
\]
Now, consider $\Om:=G^{[0,\infty)}$, the product $\si$-algebra $\FF$, the canonical process $(X_t)_{t\geq 0}$ and
\begin{equation}\label{rd05}
 \PP:=\big\{\Q\circ \big(X^{\phi}\big)^{-1}\, \big|\, \phi\;\text{is $\La$-simple}\big\}.
\end{equation}
Then, for $\EE:=\sup_{\P\in \PP}\E_\P(\, \cdot \, )$ we have that
\[
 \big(\SS(t)f\big)(x)=\EE(f(x+X_t))=\sup_{\P\in \PP} \E_\P(f(x+X_t))
\]
for all $t\geq 0$ and $f\in \BUC(G)$. Then, $(X_t)_{t\geq 0}$ satisfies all properties in the definition of an $\EE$-L\'evy process. Note that (iii) and (iv) follow immediately from the dual representation of the constructed expectation $\EE(\cdot)=\sup_{\P\in \PP}\E_\P(\cdot)$ and (v) follows as in the proof of Theorem \ref{equivlevy}.
\end{proof}

The construction of the last proposition allows us to finish the proof of Theorem~\ref{main}.

\begin{proof}[Proof of Theorem \ref{main}]
 By Proposition \ref{dualrep}, there exists a sublinear expectation space $(\Om,\FF,\EE)$ and an $\EE$-L\'evy process $(X_t)_{t\geq 0}$ such that
  \[
  \big(\SS(t)f\big)(x)=\EE(f(x+X_t))
 \]
 for all $t\geq 0$, $x\in G$ and $f\in \BUC(G)$. By Proposition \ref{viscosity},
 $$u(t,x):=\big(\SS(t)f\big)(x)=\EE(f(x+X_t))$$
 is a $D$-viscosity solution to \eqref{PDE1} - \eqref{PDE2}. Moreover, by Proposition \ref{dualrep} there exists a set of probability measures $\PP$ on $(\Om,\FF)$ such that $\EE(Y)=\sup_{\P\in \PP}\E_\P(Y)$ for all $Y\in \LL^\infty(\Om,\FF)$.
\end{proof}

\begin{remark}
 a)  We note that the set $\PP$ is constructed in \eqref{rd05}, based on $\Lambda$-simple functions. This description is in some sense different from the approach by Neufeld and Nutz \cite{NutzNeuf} and K\"uhn \cite{k18}, where the set of measures is a priori given in relation to the family of L\'evy triplets. The question of characterizing the set $\PP$ in the general situation considered in Theorem~\ref{main} seems to be hard and is outside the scope of the present paper.

 b) The construction of the nonlinear semigroup is similar to Nisio's approach \cite{Nisio}, where only equidistant partitions of the time interval are used. As we have shown in Lemma~\ref{monlimit}, $\SS(t)$ is the limit of $J_{\pi_n}$ for any increasing sequence $(\pi_n)_{n\in\N}$ with $|\pi_n|_\infty\to 0$. Taking equidistant partitions, we obtain the analogue of Nisio's semigroup. However, in Nisio \cite{Nisio} strongly continuous semigroups on $L^\infty(G)$ were considered, and it is now well known that such semigroups have a bounded generator (\cite[Corollary 4.3.19]{MR2798103}), which does not cover the interesting cases. Therefore, we work with $\BUC (G)$ as the basic space. We plan to consider other relevant spaces like $L^p$ or the space of bounded continuous functions in the future.
\end{remark}

We finish with the proof of Proposition \ref{prop:A3} which now follows essentially from Lemma~\ref{lip1}.

\begin{proof}[Proof of Proposition \ref{prop:A3}]
 Fix $t> 0$ and $\ep>0$. Let $\varphi\in \bigcap_{\la\in \La} \big(D(A_\la)\cap C_0(G)\big)$ with $0\leq \varphi\leq 1$, $\varphi(0)=1$ and $\sup_{\la\in \La}\|A_\la\varphi\|_\infty\leq \frac{\varepsilon}{2t}$. Since $\varphi(0)=1$ and
$1-\varphi\in \bigcap_{\la\in \La}D(A_\la)$ with $A_\lambda(1-\varphi)=-A_\lambda \varphi$, it follows from
Lemma \ref{lip1} that
 \begin{align*}
  \big|\big(\SS(t)(1-\varphi)\big)(0)\big|&\leq \|\SS(t)(1-\varphi)-(1-\varphi)\|_\infty\\
  &\leq t\sup_{\la\in \La}\|A_\la \varphi\|_\infty \leq \frac{\ep}{2}.
 \end{align*}
 Note that the estimate in Lemma \ref{lip1} holds for all functions in $\bigcap_{\la\in \La}D(A_\la)$.
Since $\varphi\in C_0(G)$ there exists a compact set $K\subset G$ such that
\[1_{G\sm K}(y)\le 1-\varphi(y)+\frac{\varepsilon}{2}\quad\mbox{for all }y\in G.\]
Hence,
\[
 \EE(1_{G\sm K}(X_t))\leq \EE(1-\varphi(X_t))+\frac{\ep}{2}=|\big(\SS(t)(1-\varphi)\big)(0)|+\frac{\ep}{2}\leq \ep.
\]

In a second step, let $(f_n)$ be a decreasing sequence in $\BUC(G)$ which converges pointwise to $f$. Fix $x\in G$ and $\varepsilon>0$. By the first part, there exists a compact $K\subset G$ such that
$\EE(1_{G\sm K}(X_t))\le\varepsilon/4(1+\|f_1\|_\infty+\|f\|_\infty)$. By Dini's lemma there exists $n_0\in\mathbb{N}$ such that $f_n(x+y)-f(x+y)\le\varepsilon/2$ for all $y\in K$ and $n\ge n_0$. Hence,
\begin{align*}
&\EE\big(f_n(x+X_t)\big)-\EE\big(f(x+X_t)\big)\\
\le & \EE\big((f_n(x+X_t)-f(x+X_t))1_K(X_t)\big)
+\EE\big((f_n(x+X_t)-f(x+X_t))1_{G\sm K}(X_t)\big) \\
\le & \frac{\varepsilon}{2}+2(\|f_1\|_\infty+\|f\|_\infty) \EE(1_{G\sm K}(X_t))\le\varepsilon
\end{align*}
for all $n\ge n_0$. This shows that $f\mapsto \big(\SS(t)f\big)(x)$ is continuous from above on $\BUC(G)$.
\end{proof}

\bibliographystyle{abbrv}

\end{document}